\documentclass[pdflatex,sn-mathphys,Numbered]{sn-jnl}

\usepackage{graphicx}%
\usepackage{multirow}%
\usepackage{amsmath,amssymb,amsfonts}%
\usepackage{amsthm}%
\usepackage{mathrsfs}%
\usepackage[title]{appendix}%
\usepackage{xcolor}%
\usepackage{textcomp}%
\usepackage{manyfoot}%
\usepackage{booktabs}%
\usepackage{algorithm}%
\usepackage{algorithmicx}%
\usepackage{algpseudocode}%
\usepackage{listings}%
\usepackage{ulem}
\usepackage{subfig}
\usepackage{colortbl}
\usepackage{multirow}

\theoremstyle{thmstyleone}%
\newtheorem{theorem}{Theorem}
\theoremstyle{thmstyletwo}%
\newtheorem{remark}{Remark}%
\newtheorem{lemma}{Lemma}
\theoremstyle{thmstylethree}%

\begin{document}

\title[DNWR and NNWR Methods for PDEs with Time Delay]{Dirichlet-Neumann Waveform Relaxation Method for Hyperbolic PDE with Time Delay in Multiple Subdomains}


\author[1]{\fnm{Bankim C.} \sur{Mandal}}\email{bmandal@iitbbs.ac.in}

\author*[2]{\fnm{Deeksha} \sur{Tomer}}\email{a21ma09002@iitbbs.ac.in}

\affil[1]{\orgdiv{School of Basic Sciences}, \orgname{IIT Bhubaneswar}, \orgaddress{\city{Odisha}, \postcode{752050}, \country{India}}}

\affil[2]{\orgdiv{School of Basic Sciences}, \orgname{IIT Bhubaneswar}, \orgaddress{\city{Odisha}, \postcode{752050}, \country{India}}}


 \abstract{
 Hyperbolic partial differential equations (PDEs) with time delay are essential mathematical tools used to model numerous physical systems where wave propagation or oscillations depend heavily on their historical states. As these applications scale in physical complexity, efficient parallel computing techniques become essential; however, developing highly scalable parallel solvers for delayed PDEs remains a significant computational challenge. To address this gap, this study extends the Dirichlet-Neumann Waveform Relaxation (DNWR) method to a multi-subdomain framework explicitly designed for solving time-delayed hyperbolic PDEs in parallel. We advance the underlying mathematical framework and provide a rigorous convergence analysis for both one-dimensional and two-dimensional spatial configurations. A central theoretical contribution of this work is the establishment of finite-step convergence for the multi-domain DNWR algorithm. These theoretical guarantees are firmly corroborated through comprehensive numerical experiments. Furthermore, a detailed comparative analysis against alternative domain decomposition techniques namely NNWR, OSWR, and Classical SWR, highlight the advantages of the proposed method. Finally, this approach provides a robust, highly parallelizable solver capable of efficiently handling complex hyperbolic PDEs with time delay.
 }

 \keywords{Hyperbolic PDE with Time Delay, Dirichlet-Neumann, Domain Decomposition, Waveform Relaxation,}



\maketitle

\section{Introduction}

The study of physical systems often reveals that the rate of change of a process depends not only on its current state but also on its history. When these systems involve wave propagation, vibrations, or oscillations, they are mathematically captured by Hyperbolic Partial Differential Equations (PDEs) with time delay. We encounter many phenomena that are modeled by such equations; for example, thermo-acoustic combustion systems involve wave PDEs with feedback delay \cite{Wolfgang} to model instability caused by delayed interactions between flame dynamics and acoustics. Similar delayed interactions are prevalent in control theory \cite{nicaise2006stability}. Wave equations incorporating delay terms play a critical role in modeling oscillatory processes \cite{cui2003some} and are particularly essential for describing physical systems with aftereffects \cite{Liu}.

Despite their widespread application, a significant gap still exists in the comprehensive analysis of parallel methods for PDEs involving time delays. Parallel computing techniques \cite{gander, ganderparabolic} play a vital role in addressing the large-scale, complex numerical systems that frequently arise in physics and engineering. Among these, domain decomposition methods are particularly prominent due to their divide-and-conquer approach, where the computational domain is partitioned into multiple overlapping or non-overlapping subdomains. The primary motivation for employing these methods stems from the sheer size of the underlying systems, yet efficiently parallelizing delayed hyperbolic PDEs remains a formidable challenge.

To address this, we utilize the Dirichlet-Neumann Waveform Relaxation (DNWR) method, which belongs to the class of non-overlapping domain decomposition techniques. DNWR \cite{ganderparabolic, gander} is the extension of the steady-state Dirichlet-Neumann algorithm \cite{Bjorstad} for evolution problems. After each iteration, the interface condition is updated by forming a convex combination of the values obtained from the Dirichlet and Neumann solves averaging the old and new iterates along the interface. The algorithm proceeds iteratively until the solution attains smoothness across the entire domain. This approach naturally supports the use of different numerical schemes in different subdomains, making it highly suitable for problems with heterogeneous physical properties \cite{sana2024convergence, gander2026optimized}. Additionally, an important advantage of the DNWR method is its ability to act as a direct solver when the time window length is chosen appropriately.

In this work, we extend the implementation of the substructuring waveform relaxation method, specifically DNWR, from the two-subdomain case \cite{mandal2025dirichlet} to a multi-subdomain framework for solving Hyperbolic Delay PDEs. To illustrate this, the model problem we consider is a linear wave equation having a constant time delay, as given in \cite{Rodriguez}:$$w_{tt}=c^2w_{xx}+ \lambda w(x,t-\tau)+f(x,t),\  t>\tau, x\in\Omega\subset \mathbb{R}^d$$with initial conditions,$$w(x,t)=\phi (x,t), w_t(x,t)=\psi(x,t),\  -\tau\leq t\leq 0, x\in\Omega$$and Dirichlet boundary conditions,
\begin{equation}
\label{eq_1}
 w(x,t)=g(t), t\geq 0, x\in\partial\Omega
 \end{equation}
where $c$ denotes the propagation speed of the waves, and $\lambda$ represents a free parameter. Building upon this general formulation, a key advancement of our work is extending the DNWR framework and its rigorous convergence analysis beyond one-dimensional configurations to explicitly handle two-dimensional spatial domains.

\vspace{10pt} \noindent
The primary contributions of this work are summarized as follows:
\begin{itemize}
    \item 

We investigate the convergence of the DNWR method for one-dimensional problems in a multi-subdomain setting.

\item We extend the convergence analysis of the DNWR method to two-dimensional problems within a multi-subdomain framework.

\item We conduct a comprehensive comparative analysis to evaluate the performance of DNWR against alternative frameworks, namely NNWR, OSWR, and Classical SWR.
\end{itemize}

The remainder of the paper is organized as follows. Section \ref{sec_2} presents the formulation of the DNWR algorithm in a one-dimensional multidomain setting, followed by its convergence analysis in Section \ref{sec_3}. Section \ref{sec_4} details the DNWR formulation along with its convergence analysis in a 2D multi-domain setup. Numerical experiments demonstrating the efficiency of the algorithm within this multidomain framework are provided in Section \ref{sec_5}. Finally, Section \ref{sec_6} summarizes the main results of this paper.

\section{DNWR Formulation for multiple subdomain case}\label{sec_2}
For the implementation of DNWR in multiple subdomains, we have various arrangements depending upon the chosen transmission condition on the interface boundary. Different arrangements are shown in Fig. \ref{fig:arr_3}, and Fig. \ref{fig:arr_1_2}. Through numerical experiments on the reaction-diffusion equation with time delay, we compared Arrangements 1, 2, and 3, and found that Arrangement 3 outperforms the others in terms of iteration efficiency \cite{mandal2026dirichlet}. Also for the classical heat equation, the third arrangement gives us a better convergence result; see \cite{gander2021dirichlet}. We therefore focus on studying the convergence behavior of Arrangement 3, although a similar approach can be applied to Arrangements 1 and 2.
The domain $\Omega$ is divided into $N$ multiple subdomains $\Omega_i=(x_{i-1},x_i)$. The Dirichlet subproblem is initially solved within the middle subdomain, followed sequentially by the Dirichlet--Neumann subproblems in the adjacent subdomains.  
(Refer to Figure~\ref{fig:arr_3}). This procedure is iteratively applied for \( k = 1, 2, \ldots \) for all subdomains by introducing initial guesses $h_i^0$ along interfaces. Choosing the middle subdomain $\Omega_m$ for $m = \lceil{N/2\rceil}$, we solve

 \begin{equation}\label{eq_2}
\left\{\begin{array}{rl}
\partial_{tt} e_{m}^k-c ^2\partial_{xx} e_{m}^k-\lambda e_{m}^k(x, t-\tau )&=0, \ (x, t)\in \Omega_{m}\times (0, T),  \\ 
  e_{m}^k(x, t)&=0, \  (x, t)\in \Omega_{m}\times (-\tau, 0), \\ 
 \partial_t e_{m}^k(x, t)&=0, \  (x, t)\in \Omega_{m}\times (-\tau, 0), \\
e_{m+1}^k(x_{m-1}, t)&=h^{k-1}_{m-1},  \  t\in(0, T),\\ 
e_{m+1}^k(x_{m},t)&=h_{m}^{k-1}(t),  \  t\in(0, T),
\end{array}\right.
\end{equation}\\ 
subsequently, for each $\ell = 1, 2, \ldots , m-1$, the computation proceeds with $i = m-\ell$ and $j = m + \ell$, continuing until 
$j = N$ for even $N$,
   \begin{equation}
\left\{\begin{array}{rl}\label{eq_3}
\partial_{tt} e_i^k-c ^2\partial_{xx} e_i^k-\lambda e_i^k(x, t-\tau )&=0, \ (x, t)\in \Omega_i\times (0, T),  \\ 
  e_i^k(x, t)&=0, \  (x, t)\in \Omega_i\times (-\tau, 0), \\ 
 \partial_t  e_i^k(x, t)&=0, \  (x, t)\in \Omega_i\times (-\tau, 0), \\ 
e_i^k(x_{i-1} , t)&=h^{k-1}_{i-1}(t) \  t\in(0, T),\\
\partial_x e_i^k(x_i,t)&=\partial_x e_{i+1}^k(x_i,t), \  t\in(0, T),
\end{array}\right.
\end{equation}\\
\begin{equation}\label{eq_4}
\left\{\begin{array}{rl}
\partial_{tt} e_j^k-c ^2\partial_{xx} e_j^k-\lambda e_j^k(x, t-\tau )&=0, \ (x, t)\in \Omega_j\times (0, T),  \\ 
  e_j^k(x, t)&=0, \  (x, t)\in \Omega_j\times (-\tau, 0), \\ 
\partial_t   e_j^k(x, t)&=0, \  (x, t)\in \Omega_j\times (-\tau, 0), \\ 
\partial_x e_j^k(x_{j-1},t)&=\partial_x e_{j-1}^k(x_{j-1},t), \  t\in(0, T), \\
e_j^k(x_{j} , t)&=h^{k-1}_j(t) \  t\in(0, T).
\end{array}\right.
\end{equation}\\
The update conditions along the interfaces are:\\
\begin{equation*}
h_i^{k}(t)=\theta e_{i}^{k}(x_i,t) +(1-\theta) h_i^{k-1}( t);\  1 \leq i < m \\
\end{equation*}
\begin{equation}\label{eq_5}
h_j^{k}(t)=\theta e_{j+1}^{k}(x_j,t) +(1-\theta) h_j^{k-1}( t);\ m\leq j \leq N-1,\\
\end{equation}
where $\theta\in(0,1]$ is a relaxation parameter.

   \begin{figure}[!h]
     \centering
     \includegraphics[width=0.6 \linewidth]{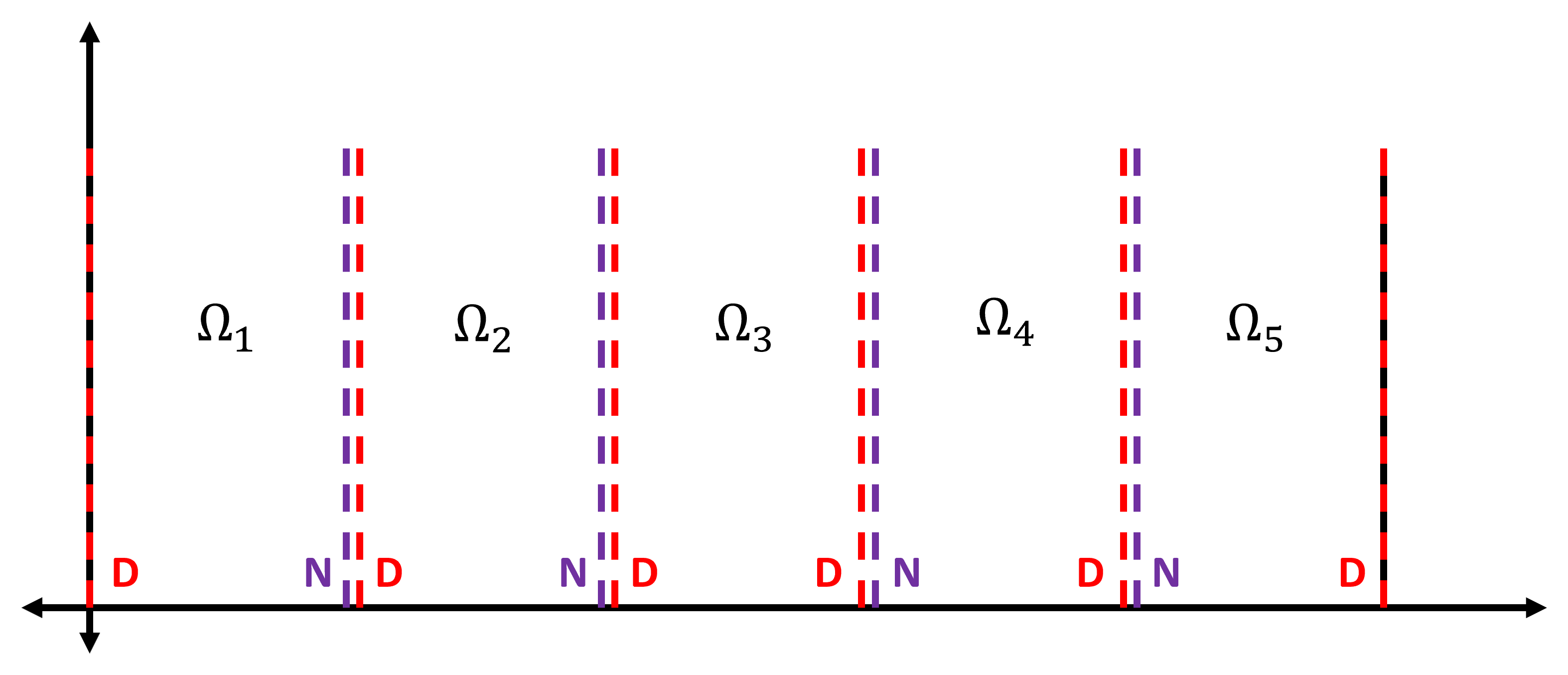}
     \caption{Boundary conditions for arrangement~3 in DNWR for multisubdomain case. }
     \label{fig:arr_3}
 \end{figure}
 
 \begin{figure}[!h]
    \centering
    \subfloat[Arrangement 1]{\includegraphics[width=0.462\linewidth]{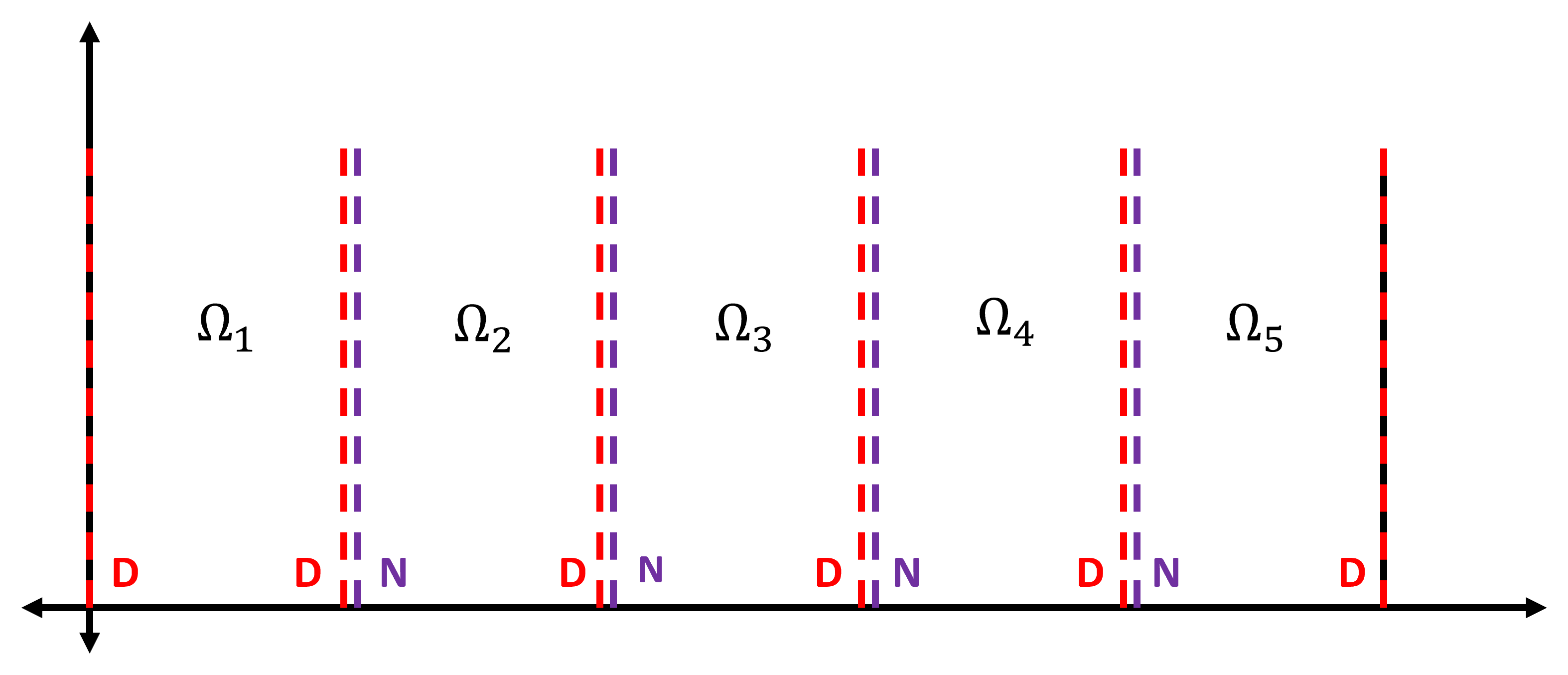}}
    \qquad
    \subfloat[Arrangement 2]{\includegraphics[width=0.462\linewidth]{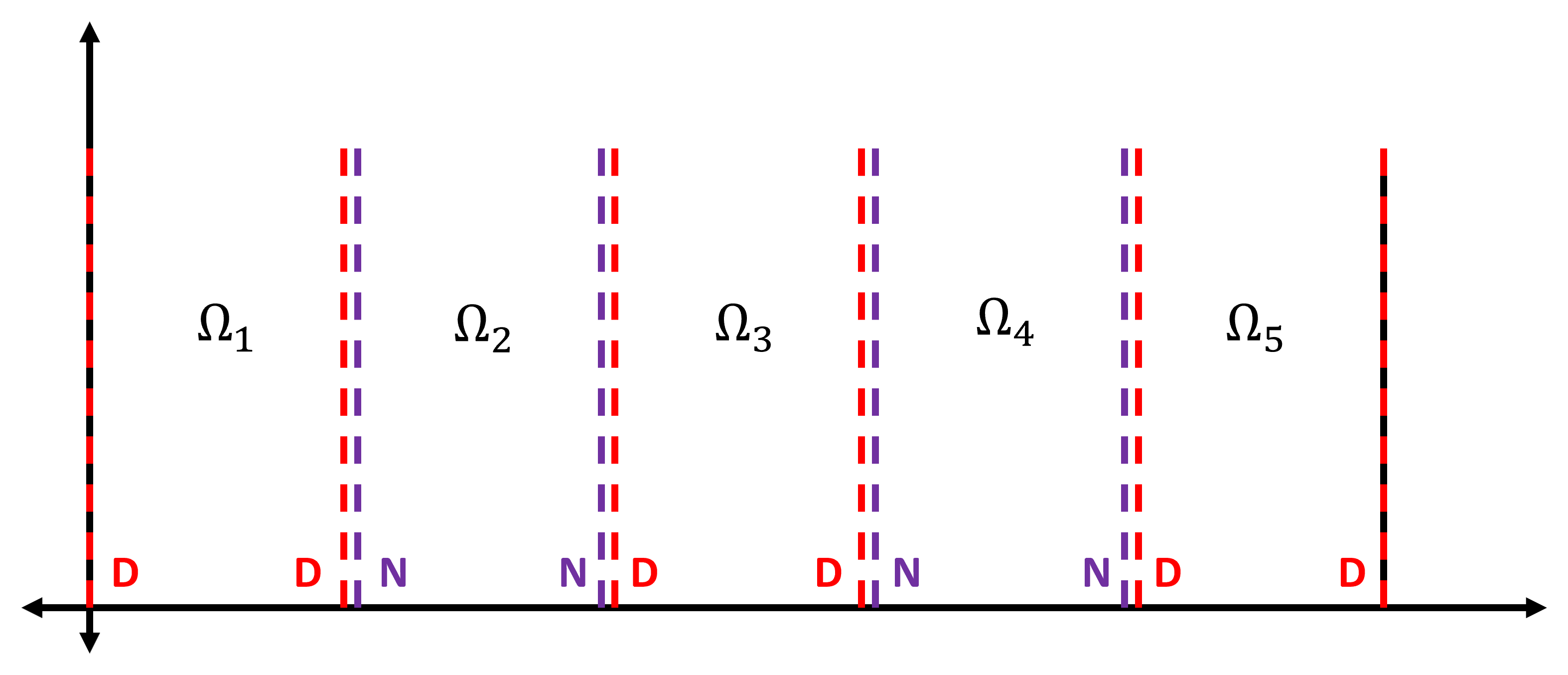}}
    \caption{Setup of boundary conditions in DNWR for multisubdomain case.}
    \label{fig:arr_1_2}
\end{figure}

 \section{DNWR Convergence Analysis}\label{sec_3}
 The convergence analysis in this section is primarily based on Laplace transform method. For the DNWR algorithm, we employ an optimal relaxation parameter of $1/2$, as established in previous studies \cite{mandal2025dirichlet,gander,ganderparabolic}. Therefore, the subsequent analysis proceeds under the assumption that $\theta = 1/2$. Before moving on to the main results, we first introduce essential preliminaries.\\
 \noindent\textbf{Definition (Convolution):}  
Let $f(t)$ and $g(t)$ be piecewise continuous functions. Their convolution is defined by,
\begin{equation}\label{convolution}
    (f * g)(t) = \mathcal{L}^{-1}\{F(s)G(s)\} =\int_{0}^{t} f(\tau)\, g(t-\tau)\, d\tau,
\end{equation}
where $F(s)$ and $G(s)$ denote the Laplace transforms of $f(t)$ and $g(t)$, respectively.\\

 \noindent \textbf{Second Translation Theorem:}  
Let $P(s)$ be the Laplace transform of a function $p(t)$, i.e., $P(s)=\mathcal{L}\{p(t)\}$. 
For $\beta \geq 0$, the inverse Laplace transform of $e^{-\beta s}P(s)$ is expressed as
\begin{equation}\label{eq_7}
    \mathcal{L}^{-1}\{e^{-\beta s}P(s)\} = H(t-\beta)\, p(t-\beta)=
    \begin{cases}
p(t - \beta), & t > \beta, \\
0, & t \leq \beta,
\end{cases},
\end{equation}
where $H(t-\beta)$ denotes the Heaviside step function.

 \noindent \textbf{Efros Theorem:} Efros theorem \cite{sana2023dirichlet} states that if $\hat{p}(s)$ and $\hat{r}(s)e^{-q(s)\tau}$ are the Laplace transforms of $p(t)$ and $r(t,\tau)$ with respect to $t$, where $\tau$ is treated as a parameter, then
\begin{equation} \label{efros}
 \mathcal{L}^{-1}\{\hat{p}(q(s))\hat{r}(s)\}=\int_{0}^{\infty} r(t,\tau)\, p(\tau)\, d\tau .   
\end{equation}

\begin{theorem}\label{thm1}
  The DNWR algorithm for \( \theta = 1/2 \) defined by equations (\ref{eq_2}-\ref{eq_5}) converges in at most \( k + 1 \) iterations for multiple subdomains, provided the time window length \( T \) satisfies:
\[
T/k \leq d_{\min}/c,
\]
where:
 \( d_{\min} \) is the minimal subdomain width, \( c \) is the wave speed.

 \end{theorem}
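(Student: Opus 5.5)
Since the equations (\ref{eq_2})--(\ref{eq_4}) are already the error equations (homogeneous data), I will work entirely in Laplace space. Applying $\mathcal{L}$ in time with the zero history, each subdomain problem becomes the ODE
\[
c^2\partial_{xx}\hat e_i^k - \bigl(s^2-\lambda e^{-\tau s}\bigr)\hat e_i^k=0,
\]
whose solutions are combinations of $\cosh(\sigma x)$ and $\sinh(\sigma x)$ with $\sigma(s)=\sqrt{s^2-\lambda e^{-\tau s}}/c$. I will solve these explicitly, beginning with the Dirichlet problem on $\Omega_m$. Then, moving outward for $\ell=1,\dots,m-1$, I will solve the Dirichlet--Neumann problems on $\Omega_{m-\ell}$ and $\Omega_{m+\ell}$, each of which receives a Neumann datum from its inner neighbour. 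Writing $d_i=x_i-x_{i-1}$, this expresses the new interface values $\hat e_i^k(x_i)$ (respectively $\hat e_{j+1}^k(x_j)$) linearly in terms of $\hat h^{k-1}$. The coefficients are built from $\coth(\sigma d_i)$, $\tanh(\sigma d_i)$, $\operatorname{sech}(\sigma d_i)$ and $\operatorname{csch}(\sigma d_i)$. Substituting into (\ref{eq_5}) with $\theta=1/2$ gives a recursion $\hat{\mathbf h}^k=\mathcal{M}(s)\hat{\mathbf h}^{k-1}$ for the interface vector.

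The key cancellation I expect mirrors the two-subdomain case \cite{mandal2025dirichlet}. In each entry of $\mathcal M$ the value $\theta=1/2$ removes the $O(1)$ part, so that every entry can be written as a finite combination of products of $\operatorname{sech}$, $\operatorname{csch}$ and differences like $\coth(\sigma d_i)-\tanh(\sigma d_{i+1})$. I will then expand each such factor as a convergent series in exponentials, for example $\operatorname{sech}(z)=2\sum_{n\ge0}(-1)^n e^{-(2n+1)z}$, $\operatorname{csch}(z)=2\sum_{n\ge 0} e^{-(2n+1)z}$, and $\coth z-1=2\sum_{n\ge1}e^{-2nz}$. The aim is to show that every entry of $\mathcal M$ is a sum of terms of the form $\hat r_n(s)e^{-2\sigma\, a_n}$ with $a_n\ge d_{\min}$. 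The decay must come from an exponential in $\sigma$ with exponent at least $2d_{\min}$ in $x$-length, i.e.\ a wave round-trip across a subdomain. Iterating $k$ times, $\hat{\mathbf h}^{k}$ is then a sum of terms each carrying a factor $e^{-\sigma\,\beta}$ with $\beta\ge 2k d_{\min}$.

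The hardest step is the inverse transform, because $\sigma$ is not simply $s/c$ but involves the delay term. I will use Efros' theorem (\ref{efros}) with $q(s)=\sigma(s)$. The kernel $\mathcal L^{-1}\{e^{-\beta\sigma(s)}\}$ is the response of the delayed wave operator to a boundary pulse, i.e.\ the fundamental solution of the half-line problem with this delay. The same structure appears after expanding $e^{-\beta\sqrt{s^2-\lambda e^{-\tau s}}/c}$ as $e^{-\beta s/c}$ times a series in $\lambda e^{-\tau s}$, each term of which is a Bessel-type transform. I must show that this kernel is supported in $t\ge \beta/c$, which is finite propagation speed $c$. Here $\lambda e^{-\tau s}$ is a lower-order perturbation, so $\sigma(s)= s/c+O(1/s)$ and the factor $e^{-\beta\sigma}$ equals $e^{-\beta s/c}$ times something analytic and bounded in a right half-plane. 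A Paley--Wiener argument, or the second translation theorem (\ref{eq_7}) applied termwise, should then give support in $t\ge\beta/c$. Getting the bookkeeping of the factors between successive subdomains right, so that the exponent per iteration is really $\ge 2d_{\min}$ (or at least $d_{\min}$), is where care is needed.

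Once this holds, $h_i^{k}(t)=0$ for $t\le 2kd_{\min}/c$, or in the weaker version for $t\le kd_{\min}/c$. Under $T/k\le d_{\min}/c$ the interface traces therefore vanish on $(0,T)$ after $k$ iterations. One further subdomain solve with zero interface data then yields zero error on every $\Omega_i$, which gives convergence in at most $k+1$ iterations.
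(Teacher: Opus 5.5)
Your plan follows the paper's proof. Both use the Laplace transform in $t$ with $\sigma=\sqrt{s^2-\lambda e^{-\tau s}}/c$ (the paper's $\zeta$), explicit solves outward from $\Omega_m$, the interface recursion at $\theta=1/2$, and an exponential expansion of its coefficients. An inverse transform then shows $h_i^k=0$ for $t\le kd_{\min}/c$. In its weaker form, with exponent $\ge d_{\min}$ per iteration, the plan is correct, and that is exactly what the paper proves.

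\textbf{The stronger form is false.} Drop the $2d_{\min}$-per-iteration aim; it fails for $N\ge 3$. Setting $\theta=1/2$ only removes the $O(1)$ part of the diagonal entries, which do decay like $e^{-2\min(d_i,d_{i+1})\sigma}$. The off-diagonal entries, however, are one-way transmissions through a single subdomain:
\begin{itemize}
\item the coefficient of $\hat h^{k-1}_{i-1}$ in $\hat h^k_i$ is $1/(2\cosh(d_i\sigma))\sim e^{-d_i\sigma}$;
\item the coupling across the middle Dirichlet subdomain is $\tanh(d_{m-1}\sigma)/(2\sinh(d_m\sigma))\sim e^{-d_m\sigma}$.
\end{itemize}
Concretely, for $N=3$ the $(1,1)$ entry of $\mathcal M^2$ contains $\tanh(d_1\sigma)\tanh(d_3\sigma)/(4\sinh^2(d_2\sigma))\sim e^{-2d_2\sigma}$. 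This term is not cancelled by the square of the diagonal entry, which decays like $e^{-4d_2\sigma}$ when $d_2<d_1$. So with $d_2=d_{\min}$, $h_1^2$ is generically nonzero just after $t=2d_{\min}/c$, not $4d_{\min}/c$. You get one crossing of a subdomain per iteration, not the round trip of the two-subdomain case. This is why Table~\ref{tab:comparison} lists $2\min\{a,b\}$ for two subdomains but $d_{\min}$ for many.

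\textbf{Your inverse-transform step is more careful than the paper's.} The paper asserts that the coefficients are sums of exponentials with exponent at least $kd_l/c$ and applies the second translation theorem. It handles the $\lambda e^{-\tau s}$ inside $\zeta$ only heuristically, through an extra $\tau$-shifted term. Your factorization $e^{-\beta\sigma}=e^{-\beta s/c}\,e^{-\beta(\sigma-s/c)}$, with the second factor analytic and bounded for $\mathrm{Re}\,s$ large, combined with Paley--Wiener, makes finite propagation speed rigorous.

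You can then skip the series entirely. Each entry of $\mathcal M$ is $e^{-d_{\min}\sigma}$ times a function bounded and analytic in a right half-plane. Hence $\mathcal M^k=e^{-kd_{\min}s/c}\,G(s)$ with $G$ of the same type, and causality of $G$ gives the support claim directly.

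Efros' theorem with $q=\sigma$ does not help here. Its kernel is $\mathcal L^{-1}\{e^{-\beta\sigma}\}$, which is the very quantity you are trying to invert, so the argument would be circular.
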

 \begin{proof}

To initiate the analysis, we apply the Laplace transform in the time variable \( t \) to the homogeneous Dirichlet subproblem described in equation (\ref{eq_2}). Let \( \hat{e}^k_{m}(x, s) \) represent the Laplace-transformed solution for the error equation within subdomain \( \Omega_{m} \) at the \( k \)-th iteration.
 The transformed subdomain problem from equation \eqref{eq_2} becomes:
$$
s^2 \hat{e}^k_{m} - c^2 \frac{ \hat{e}^k_{m}}{dx^2}-\lambda e^{-\tau s}\hat e_{m}^k = 0,
$$
with boundary conditions:
$$
\hat{e}^k_{m}(x_{m-1}, s) = \hat{h}^{k-1}_{m-1}(s), \quad 
\hat{e}^k_{m}(x_{m}, s) = \hat{h}^{k-1}_{m}(s).
$$
Define $\alpha _i= \sinh(d_i\frac{\sqrt{s^2-\lambda e^{-\tau s}}}{c})$ and $\beta  _i= \cosh(d_i\frac{\sqrt{s^2-\lambda e^{-\tau s}}}{c})$
where \( d_{i} = x_{i} - x_{i-1} \) is the width of subdomain \( \Omega_{i} \).
Then the solution within subdomain \( \Omega_{m} \) is given by:
{\fontsize{8}{9.5}\selectfont
$$
\hat{e}^k_{m}(x, s) = 
\frac{1}{\alpha_{m}} \left[
\hat{h}^{k-1}_{m}(s) \sinh\left(\frac{\sqrt{s^2-\lambda e^{-\tau s}}}{c}(x - x_{m-1})\right) + 
\hat{h}^{k-1}_{m-1}(s) \sinh\left(\frac{\sqrt{s^2-\lambda e^{-\tau s}}}{c}(x_{m} - x)\right)
\right].
$$}
Let us define \( \zeta=\frac{\sqrt{s^2-\lambda e^{-\tau s}}}{c}\); then $\alpha_i=\sinh(d_i \zeta)$, $\beta_i=\cosh(d_i \zeta)$, $\hat w_i^k(s)=\partial_x\hat e^k_{i+1}(x_i,s)$, $\hat w_j^k(s)=-\partial_x\hat e_j^k(x_j,s)$ and the solutions to the subproblems are expressed as follows: for \(1 \leq i < m\) and \(m+1 \leq j \leq N\)
\begin{align*}
\hat{e}_i^k(x, s) 
&= \frac{1}{\beta_i \zeta}\hat{w}^k_i 
   \sinh\!\left((x - x_{i-1})\zeta\right)
   + \frac{1}{\beta_i} \hat{h}^{k-1}_{i-1} 
   \cosh\!\left((x_i - x)\zeta\right), \\[6pt]
\hat{e}_j^k(x, s) 
&= \frac{1}{\beta_j} \hat{h}^{k-1}_j 
   \cosh\!\left((x - x_{j-1}) \zeta\right)
   + \frac{1}{\beta_j\zeta}\hat{w}^k_{j-1} 
   \sinh\!\left((x_j - x)\zeta\right).
\end{align*}  
For \(\theta = 1/2\), the update conditions become:

\[
\hat{w}^k_i = -\zeta \frac{\alpha_{i+1}}{\beta_{i+1}} \hat{h}^{k-1}_i + \frac{1}{\beta_{i+1}} \hat{w}^k_{i+1}, \quad 1 \leq i < m-1,
\]

\[
\hat{w}^k_{m-1} = -\zeta \frac{\beta_{m}}{\alpha_{m}} \hat{h}^{k-1}_{m-1} + \zeta  \frac{1}{\alpha_{m}} \hat{h}^{k-1}_{m}, \quad
\hat{w}^k_{m} = \zeta \frac{1}{\alpha_{m}} \hat{h}^{k-1}_{m-1} - \zeta  \frac{\beta_{m}}{\alpha_{m}} \hat{h}^{k-1}_{m},
\]

\[
\hat{w}^k_j = \frac{1}{\beta_j} \hat{w}^k_{j-1} - \zeta  \frac{\alpha_j}{\beta_j} \hat{h}^{k-1}_j, \quad m+1 \leq j \leq N-1.
\]
\[
\hat h^k_i = \frac{1}{2\beta_{i}} \hat{h}^{k-1}_{i-1} + \frac{1}{2} \hat{h}^{k-1}_{i} + \frac{\alpha_i}{2\beta_i}\frac{\hat{w}^k_i }{\zeta}, \quad 1 \leq i < m,
\]

\[
\hat{h}^k_j = \frac{\alpha_{j+1}}{2\beta_{j+1}} \frac{\hat{w}^k_j}{\zeta} + \frac{1}{2} \hat{h}^{k-1}_j + \frac{1}{2\beta_{j+1}} \hat{h}^{k-1}_{j+1}, \quad m \leq j \leq N-1.
\]
We now define:
\[
\bar{h}^k_i := \beta_i \hat{h}^k_i, \quad \bar{w}^k_i := \frac{\hat{w}^k_i\alpha_i}{\zeta}, \quad 1 \leq i < m,
\]
\[
\bar{h}^k_j := \beta_{j+1} \hat{h}^k_j, \quad \bar{w}^k_j := \frac{\hat{w}^k_j \alpha_{j+1}}{\zeta}, \quad m \leq j \leq N,
\]
with boundary values \( \beta_0 = \beta_{N+1} = 1 \).
Then, 
we get update equations for \( \bar{h}_k \):
\[
\bar{h}^k_i = \frac{1}{2\beta_{i-1}} \bar{h}^{k-1}_{i-1} + \frac{1}{2} \bar{h}^{k-1}_{i} + \frac{1}{2} \bar{w}^k_i, \quad 1 \leq i < m,
\]
\[
\bar{h}^k_j = \frac{1}{2} \bar{w}^k_j + \frac{1}{2} \bar{h}^{k-1}_j + \frac{1}{2\beta_{j+2}} \bar{h}^{k-1}_{j+1}, \quad m \leq j \leq N-1.
\]
The update equations for \( \bar{w}_k \):

\[
\bar{w}^k_i = -\frac{\alpha_i \alpha_{i+1}}{ \beta_i \beta_{i+1}} \bar{h}^{k-1}_i + \frac{\alpha_i}{\alpha_{i+1}\beta_{i+1}} \bar{w}^k_{i+1}, \quad 1 \leq i \leq m - 2,
\]
\[
\bar{w}^k_{m-1} = -\frac{\alpha_{m-1} \beta_{m} }{\beta_{m-1} \alpha_{m}} \bar{h}^{k-1}_{m-1} + \frac{\alpha_{m-1}}{\alpha_{m} \beta_{m+1} } \bar{h}^{k-1}_{m},
\]
\[
\bar{w}^k_{m} = \frac{\alpha_{m+1}}{ \beta_{m-1} \alpha_{m}} \bar{h}^{k-1}_{m-1} - \frac{\beta_{m} \alpha_{m+1}}{\alpha_{m} \beta_{m+1}}  \bar{h}^{k-1}_{m},
\]
\[
\bar{w}^k_j = \frac{\alpha_{j+1}}{\beta_j \alpha_j} \bar{w}^k_{j-1} - \frac{\alpha_j \alpha_{j+1}}{\beta_j \beta_{j+1}} \bar{h}^{k-1}_j, \quad m+1 \leq j \leq N-1.
\]
Thus, the Matrix representation of interface error where $\alpha _{i,j}=\sinh((d_i-d_j)\zeta)$ and $\beta_{i,j}=\cosh((d_i-d_j)\zeta)$ is given as:
\[
\begin{pmatrix}
\bar h_1^{k}\\
\vdots\\
\bar h_{m-1}^{k}\\
\bar h_{m}^{k}\\
\vdots\\
\bar h_{N-1}^{k}
\end{pmatrix}
=
M
\begin{pmatrix}
\bar h_1^{k-1}\\
\vdots\\
\bar h_{m-1}^{k-1}\\
\bar h_{m}^{k-1}\\
\vdots\\
\bar h_{N-1}^{k-1}
\end{pmatrix}, \quad \text{with} \; M=
\begin{pmatrix}
P_1 & P_2\\
P_3 & P_4
\end{pmatrix}_{(N-1)\times (N-1)},
\]
and
\[
P_1=
\begin{bmatrix}
\dfrac{\beta_{1,2}}{2\beta_1\beta_2}
&
\dfrac{-\alpha_1\alpha_3}{2\beta_2^2\beta_3}
& \cdots &
\dfrac{-\alpha_1\alpha_{m-1}}{2\beta_2\cdots\beta_{m-3}\beta_{m-2}^2\beta_{m-1}}
&
\dfrac{-\alpha_1\beta_{m}}{2\beta_2\cdots\beta_{m-2}\beta_{m-1}^2\alpha_{m}}
\\[6pt]
\dfrac{1}{2\beta_1}
&
\dfrac{\beta_{2,3}}{2\beta_2\beta_3}
& \cdots &
\dfrac{-\alpha_2\alpha_{m-1}}{2\beta_3\cdots\beta_{m-3}\beta_{m-2
}^2\beta_{m-1}}
&
\dfrac{-\alpha_2\beta_{m}}{2\beta_3\cdots\beta_{m-2}\beta_{m-1}^2\alpha_{m}}
\\
0 & \ddots & \ddots & \vdots & \vdots\\
\vdots & 0 &
\dfrac{1}{2\beta_{m-3}}
&
\dfrac{\beta_{m-2,m-1}}{2\beta_{m-2}\beta_{m-1}}
&
\dfrac{-\alpha_{m-2}\beta_{m}}{2\beta_{m-1}^2\alpha_{m}}
\\
0 & \cdots & 0 &
\dfrac{1}{2\beta_{m-2}}
&
\dfrac{\alpha_{m,m-1}}{2\alpha_{m}\beta_{m-1}}
\end{bmatrix},
\]
\[
\begin{aligned}
P_2=
\begin{bmatrix}
\dfrac{\alpha_1}{2\beta_2\cdots\beta_{m-1}\alpha_{m}\beta_{m+1}} & 0 & \cdots &\cdots & 0\\
\dfrac{\alpha_2}{2\beta_3\cdots\beta_{m-1}\alpha_{m}\beta_{m+1}} & \vdots & \vdots &\vdots & \vdots\\
\vdots & \vdots & \vdots &\vdots & \vdots\\
\dfrac{\alpha_{m-2}}{2\beta_{m-1}\alpha_{m}\beta_{m+1}} & \vdots & \vdots &\vdots & \vdots\\
\dfrac{\alpha_{m-1}}{2\alpha_{m}\beta_{m+1}} & 0 & \cdots &\cdots & 0
\end{bmatrix},
\qquad
P_3=
\begin{bmatrix}
0 & \cdots &\cdots & 0 &
\dfrac{\alpha_{m+1}}{2\beta_{m-1}\alpha_{m}}
\\
\vdots & \vdots &\vdots & \vdots &
\dfrac{\alpha_{m+2}}{2\beta_{m-1}\alpha_{m}\beta_{m+1}}
\\
\vdots & \vdots & \vdots &\vdots & \vdots\\
\vdots & \vdots &\vdots & \vdots &
\dfrac{\alpha_{N-1}}{2\beta_{m-1}\alpha_{m}\beta_{m+1}\cdots \beta_{N-2}}
\\
0 & \cdots & \cdots & 0 &
\dfrac{\alpha_{N}}{2\beta_{m-1}\alpha_{m}\beta_{m+1}\cdots\beta_{N-1}}
\end{bmatrix},
\end{aligned}
\]

\[
P_4=
\begin{bmatrix}
\dfrac{\alpha_{m,m+1}}{2\alpha_{m}\beta_{m+1}}
&
\dfrac{1}{2\beta_{m+2}}
& 0 & \cdots & 0
\\
\dfrac{-\alpha_{m+2}\beta_{m}}{2\alpha_{m}\beta_{m+1}^2}
&
\dfrac{\beta_{m+1,m+2}}{2\beta_{m+1}\beta_{m+2}}
&
\dfrac{1}{2\beta_{m+3}}
& 0 & \vdots
\\
\vdots & \vdots & \ddots & \ddots & 0\\
\dfrac{-\alpha_{N-1}\beta_{m}}{2\alpha_{m}\beta_{m+1}^2\beta_{m+2}\cdots\beta_{N-2}}
&
\dfrac{-\alpha_{N-1}\alpha_{m+1}}{2\beta_{m+1}\beta_{m+2}^2\beta_{m+3}\cdots\beta_{N-2}}
&
\ddots
&
\dfrac{\beta_{N-2,N-1}}{2\beta_{N-2}\beta_{N-1}}
&
\dfrac{1}{2\beta_{N}}
\\
\dfrac{-\alpha_{N}\beta_{m}}{2\alpha_{m}\beta_{m+1}^2\beta _{m+2}\cdots\beta_{N-1}}
&
\dfrac{-\alpha_{N}\alpha_{m+1}}{2\beta_{m+1}\beta_{m+2}^2\beta_{m+3}\cdots\beta_{N-1}}
&
\cdots
&
\dfrac{-\alpha_{N-2}\alpha_{N}}{2\beta_{N-2}\beta_{N-1}^2}
&
\dfrac{\beta_{N-1,N}}{2\beta_{N-1}\beta_{N}}
\end{bmatrix}.
\]
As a result, the update condition is modified to,
\begin{equation}\label{eq_9}
\hat h_i^k(s)=\sum_{l=i-1}^{m}\hat\gamma_{i,l}\hat h_l^{k-1}(s), \quad 1\leq i< m; \qquad
\hat h_j^k(s)=\sum_{l=m-1}^{j+1}\hat\gamma_{j,l}\hat h_l^{k-1}(s), \quad m\leq j\leq N-1.
\end{equation}
The coefficients $\hat{\gamma}_{i,j}$ are defined such that $\hat{\gamma}_{1,0} = \hat{\gamma}_{N-1,N} = 0$, and for $1 \leq i < m-1$ and $i + 1 \leq l < m-1$, the following holds:

\begin{align*}
\hat{\gamma}_{i, i-1} = \frac{1}{2\beta_i},
\hat{\gamma}_{i, i} = \frac{\beta_{i, i+1}}{2\beta_i \beta_{i+1}},
\hat{\gamma}_{i, l} = -\frac{\alpha_i \alpha_{l+1}}{2\beta_i \beta_{i+1} \cdots \beta_{l+1}}, \\
\hat{\gamma}_{i, m-1} = -\frac{\alpha_i \beta_{m}}{2\beta_i \cdots \beta_{m-1} \alpha_{m}}, 
\hat{\gamma}_{i, m} = \frac{\alpha_i}{2\beta_i \cdots \beta_{m-1} \alpha_{m}}, \\
\text{ and for } m < l \leq j-1,\, m < j \leq N-1 \quad & \\
\hat{\gamma}_{j, j} = \frac{\beta_{j, j+1}}{2\beta_j \beta_{j+1}}, 
\hat{\gamma}_{j, l} = -\frac{\alpha_{j+1} \alpha_l}{2\beta_l \beta_{l+1} \cdots \beta_{j+1}}, 
\hat{\gamma}_{j, j+1} = \frac{1}{2\beta_{j+1}}, \\
\hat{\gamma}_{j, m} = -\frac{\alpha_{j+1} \beta_{m}}{2\alpha_{m} \beta_{m+1} \cdots \beta_{j+1}}, 
\hat{\gamma}_{j, m-1} = \frac{\alpha_{j+1}}{2\alpha_{m} \beta_{m+1} \cdots \beta_{j+1}}.
\end{align*}
Also,
\begin{align*}
\hat{\gamma}_{m-1, m-2} = \frac{1}{2\beta_{m-1}},
\hat{\gamma}_{m-1, m-1} = \frac{\alpha_{m,m-1}}{2\alpha_{m}\beta_{m-1} },
\hat{\gamma}_{m-1, m} = \frac{\alpha_{m-1}}{2 \alpha_{m} \beta_{m-1}}, \\
\hat{\gamma}_{m, m-1} = \frac{\alpha_{m+1}} {2\alpha_{m} \beta_{m+1}}, 
\hat{\gamma}_{m, m} = \frac{\alpha_{m,m+1}}{2\alpha_{m} \beta_{m+1}},
\hat{\gamma}_{m,m+1}=\frac{1}{2\beta_{m+1}.}
\end{align*}
By induction on \eqref{eq_9}, we can write for $1 \leq i \leq N-1$,

\begin{equation}\label{eq_10}
\hat{h}^k_i(s) = \sum_{j=1}^{N-1} \mu^n_{i,j} \left( \hat{\gamma}_{1,1}, \hat{\gamma}_{1,2}, \ldots, \hat{\gamma}_{N-1,N-2}, \hat{\gamma}_{N-1,N-1} \right) \hat{h}^{k-n}_j(s).
\end{equation}
The coefficients $\mu^n_{i,j}$ are either zero or homogeneous polynomials of degree $n$.
Subsequently, we express the hyperbolic functions as infinite series of exponential functions by applying the geometric series expansion, utilizing the identity

\begin{equation}
\cosh(z) = \frac{1}{2}(e^z + e^{-z}) = \frac{e^z}{2}(1 + e^{-2z}).
\end{equation}
This enables us to express the coefficients $\hat{\gamma}_{i,j}$ as follows: for $1 \leq i < m-1$ and $i + 1 \leq l < m-1$, the $(i,i)$-th entry is given by:
\begin{align*}
\hat\gamma_{i,i} 
&= \frac{\cosh\left((d_i - d_{i+1})\zeta\right)}{2 \cosh(d_i \zeta) \cosh(d_{i+1}\zeta)} \\
&= \frac{e^{(d_i - d_{i+1})\zeta} + e^{(d_{i+1} - d_i)\zeta}}{e^{d_i \zeta}(1 + e^{-2d_i \zeta}) \cdot e^{d_{i+1} \zeta}(1 + e^{-2d_{i+1}\zeta})} \\
&= \frac{e^{-2d_{i+1}\zeta} + e^{-2d_i \zeta}}{(1 + e^{-2d_i \zeta})(1 + e^{-2d_{i+1}\zeta})} \\
&= \left( e^{-2d_i \zeta} + e^{-2d_{i+1}\zeta} \right)
\left[
1 + \sum_{l=1}^{\infty} (-1)^l e^{-2ld_i \zeta} 
+ \sum_{n=1}^{\infty} (-1)^n e^{-2n d_{i+1}\zeta} \right. \\
&\quad \left.
+ \sum_{l=1}^{\infty} \sum_{n=1}^{\infty} (-1)^{l+n} e^{-2(l d_i + n d
_{i+1})\zeta}
\right].
\end{align*}
In a similar manner, the remaining coefficients can be expanded as follows:
\begin{align*}
\hat\gamma_{i,l} &= -\frac{\sinh(d_i \zeta)\sinh(d_{l+1} \zeta)}{2 \cosh(d_i \zeta)\cosh(d_{i+1} \zeta) \cdots \cosh(d_{l+1} \zeta)} \\
&= -2^{l - i - 1} e^{-(d_{i+1} + \cdots + d_l)\zeta} 
\left( 1 - e^{-2d_i \zeta} - e^{-2d_{l+1} \zeta} + e^{-2(d_i + d_{l+1})\zeta} \right)
\prod_{n=i}^{l+1} \left( 1 + e^{-2d_n \zeta} \right)^{-1}, \\[1em]
\hat{\gamma}_{i,i-1} &= \frac{1}{2 \cosh(d_i \zeta)} 
= e^{-d_i \zeta} \left( 1 + \sum_{l=1}^{\infty} (-1)^l e^{-2ld_i \zeta} \right), \\[1em]
\hat{\gamma}_{i,m-1} &= -\frac{\sinh(d_i \zeta)\cosh(d_{m} \zeta)}{2 \cosh(d_i \zeta) \cdots \cosh(d_{m-1} \zeta)\sinh(d_{m} \zeta)} \\
&= -2^{m - i - 2} e^{-(d_{i+1} + \cdots + d_{m-1})\zeta}
\left( 1 - e^{-2d_i \zeta} + e^{-2d_{m} \zeta} - e^{-2(d_i + d_{m})\zeta} \right)\\
&\left( 1 - e^{-2d_{m} \zeta} \right)^{-1}
\prod_{l=i}^{m-1} \left( 1 + e^{-2d_l \zeta} \right)^{-1},
\end{align*}
\begin{align*}
\hat{\gamma}_{i,m} 
&= \frac{\sinh(d_i \zeta)}{2 \cosh(d_i \zeta) \cdots \cosh(d_{m-1} \zeta) \sinh(d_{m} \zeta)} \\
&= 2^{m-i-1} e^{-(d_{i+1} + \cdots + d_{m})\zeta}
\left( 1 - e^{-2d_i \zeta} \right)
\left( 1 - e^{-2d_{m} \zeta} \right)^{-1}
\prod_{l=i}^{m-1} \left( 1 + e^{-2d_l \zeta} \right)^{-1}, \\[1em]
\hat{\gamma}_{m-1,m-1} 
&= \frac{\sinh((d_{m} - d_{m-1})\zeta)}{2 \cosh(d_{m-1} \zeta) \sinh(d_{m} \zeta)} \\
&= \left( e^{-2d_{m-1} \zeta} - e^{-2d_{m} \zeta} \right)\cdot\\
&\left[
1 + \sum_{l=1}^{\infty} (-1)^l e^{-2l d_{m-1} \zeta}
+ \sum_{n=1}^{\infty} e^{-2n d_{m} \zeta}
+ \sum_{l=1}^{\infty} \sum_{n=1}^{\infty} (-1)^l e^{-2(l d_{m-1} + n d_{m}) \zeta}
\right], \\[1em]
\hat{\gamma}_{m-1,m} 
&= \frac{\sinh(d_{m-1} \zeta)}{2 \cosh(d_{m-1} \zeta) \sinh(d_{m} \zeta)} \\
&= \left( e^{-d_{m} \zeta} - e^{-(2d_{m-1} + d_{m}) \zeta} \right)\cdot\\
&\left[
1 + \sum_{l=1}^{\infty} (-1)^l e^{-2l d_{m-1} \zeta}
+ \sum_{n=1}^{\infty} e^{-2n d_{m} \zeta}
+ \sum_{l=1}^{\infty} \sum_{n=1}^{\infty} (-1)^l e^{-2(l d_{m-1} + n d_{m}) \zeta}
\right].
\end{align*}
The remaining terms are similarly expanded using the geometric series. These resulting expressions enable us to reformulate Equation \eqref{eq_10} as
\[
\hat{h}^k_i(s) = \sum_{j=1}^{N-1} \hat{p}^k_{i,j}(s)\, \hat{h}^0_j(s),
\]
Each coefficient $\hat{p}^k_{i,j}(s)$ is a linear combination of exponential terms of the form $e^{-s\zeta}$, where $\zeta \geq \frac{k d_l}{c}$ for some $l \in \{1, 2, \ldots, N\}$. By applying the shifting property \eqref{eq_7} of the inverse Laplace transform,
we deduce that
\[
h^k_i(t) = h^0_j \left(t - \frac{k d_l}{c} \right) H \left(t - \frac{k d_l}{c} \right) + h^0_j \left(t - \frac{k d_l}{c} -\tau\right) H \left(t - \frac{k d_l}{c} -\tau  \right)+ \text{other terms}.
\]
for some $j \in \{1, 2, \ldots, N-1\}$ and $l \in \{1, 2, \ldots, N\}$, with all other terms vanishing when $t \leq \frac{k d_l}{c}$. Hence, if $T \leq \frac{k d_{\min}}{c}$, it follows that $h^k_i(t) = 0$ for all $i$, thereby concluding the proof.\\
 \end{proof}

 \section{DNWR Convergence in 2D}\label{sec_4}
In this section we analyse the efficiency of DNWR in 2-D spatial domain for wave PDEs with time delay for model problem \eqref{eq_1}. We consider again the error equation of the model problem due to the linearity. To formulate the DNWR algorithm, we decompose the spatial domain \( \Omega \) into a set of vertical strips defined as
$
\Omega_i = (x_{i-1}, x_i) \times (0, \pi), \quad \text{for } i = 1, \ldots, N,
$
where \( x_0 = l < x_1 < \cdots < x_N = L \). The width of each subdomain is denoted by \( d_i := x_i - x_{i-1} \), and we define the minimal subdomain width as $d_{\min} := \min_{1 \leq i \leq N} d_i$.

The DNWR algorithm in 2D looks like combination of cuboids (see Fig.\ref{2Dstrips}). For $m:=\lceil N/2 \rceil $, this procedure is iteratively applied for \( k = 1, 2, \ldots \) for all subdomains and are expressed as:
\begin{figure}
    \centering
    \includegraphics[width=0.9 \linewidth]{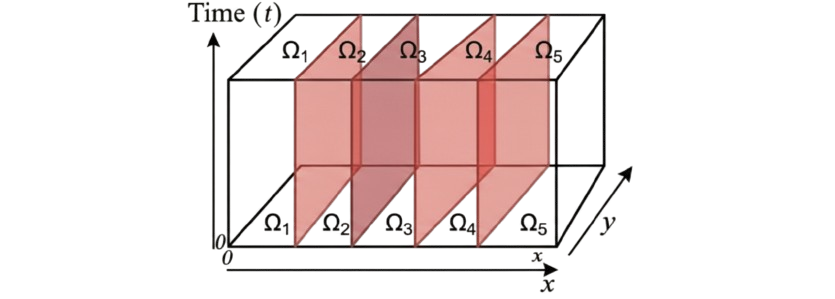}
   \caption{Evolution of 2D spatial strips over time as a decomposition of the total space-time volume.}
     \label{2Dstrips}
    \end{figure}

 \begin{equation}\label{eq_12}
\left\{\begin{array}{rl}
\partial_{tt} e_{m}^k-c ^2(\partial_{xx} e_{m}^k+\partial_{yy}e_{m}^k)-\lambda e_{m}^k(x, y, t-\tau )&=0, \ (x,y, t)\in \Omega_{m}\times (0, T),  \\ 
  e_{m}^k(x,y, t)&=0, \  (x,y, t)\in \Omega_{m}\times (-\tau, 0), \\ 
 \partial_t e_{m}^k(x, y, t)&=0, \  (x, y, t)\in \Omega_{m}\times (-\tau, 0), \\
e_{m}^k(x_{m-1},y, t)&=\kappa^{k-1}_{m-1}(y,t),  \  t\in(0, T),\\ 
e_{m}^k(x_{m},y ,t)&=\kappa_{m}^{k-1}(y,t),  \  t\in(0, T),\\
e_{m}^k(x,0,t)&=e_{m}^k(x,\pi,t)=0, \ t \in (0,T),
\end{array}\right.
\end{equation}

 and then for $1\leq i < m$,
   \begin{equation}
\left\{\begin{array}{rl}\label{eq_13}
\partial_{tt} e_i^k-c ^2(\partial_{xx} e_i^k+\partial_{yy} e_i^k)-\lambda e_i^k(x, y, t-\tau )&=0, \ (x, y, t)\in \Omega_i\times (0, T),  \\ 
  e_i^k(x, y, t)&=0, \  (x, y,t)\in \Omega_i\times (-\tau, 0), \\ 
 \partial_t  e_i^k(x, y, t)&=0, \  (x, y,t)\in \Omega_i\times (-\tau, 0), \\ 
e_i^k(x_{i-1}, y, t)&=\kappa^{k-1}_{i-1}(y,t), \  t\in(0, T),\\
\partial_x e_i^k(x_i, y, t)&=\partial_x e_{i+1}^k(x_i, y,t), \  t\in(0, T),\\
e_i^k(x,0,t)&=e_i^k(x,\pi,t)=0, \ \ t\in (0,T),
\end{array}\right.
\end{equation}
and for $m+1\leq j \leq N$ \\
\begin{equation}\label{eq_14}
\left\{\begin{array}{rl}
\partial_{tt} e_j^k-c ^2(\partial_{xx} e_j^k+\partial_{yy} e_j^k)-\lambda e_j^k(x, y, t-\tau )&=0, \ (x, y, t)\in \Omega_j\times (0, T),  \\ 
  e_j^k(x, y, t)&=0, \  (x, y, t)\in \Omega_j\times (-\tau, 0), \\ 
\partial_t   e_j^k(x, y, t)&=0, \  (x, y, t)\in \Omega_j\times (-\tau, 0), \\ 
\partial_x e_j^k(x_{j-1}, y, t)&=\partial_x e_{j-1}^k(x_{j-1}, y, t), \  t\in(0, T), \\
e_j^k(x_{j},y, t)&=\kappa^{k-1}_j(y,t) \  t\in(0, T),\\
e_j^k(x,0,t)&=e_j^k(x,\pi,t)=0, \ t \in (0,T).
\end{array}\right.
\end{equation}
\\
The update conditions along the interfaces are:\\
\begin{equation*}
\kappa_i^{k}(y,t)=\theta e_{i}^{k}(x_i,y,t) +(1-\theta) \kappa_i^{k-1}( y, t);\  1 \leq i < m \\
\end{equation*}
\begin{equation}\label{eq_15}
\kappa_j^{k}(y,t)=\theta e_{j+1}^{k}(x_j,y,t) +(1-\theta) \kappa_j^{k-1}(y, t);\ m\leq j \leq N-1.\\
\end{equation}
Applying a Fourier sine series in the $y$-direction now decouples our two-dimensional problems into a set of independent one-dimensional problems for each Fourier mode. Consequently, the solution is expressed as a Fourier sine series,
\[
e^k_i(x, y, t) = \sum_{n=1}^{\infty} E^k_i(x, n, t) \sin(ny),
\]
with the coefficients defined as,
\[
E^k_i(x, n, t) = \frac{2}{\pi} \int_0^{\pi} e^k_i(x, \eta, t) \sin(n \eta)\, d\eta.
\]
Thus, the problem reduces to
\begin{equation}\label{2d_to_1d}
\frac{\partial^2 E^k_i}{\partial t^2}(x, n, t) - c^2 \frac{\partial^2 E^k_i}{\partial x^2}(x, n, t) + c^2 n^2 E^k_i(x, n, t)-\lambda E_i^k(x,n, t-\tau ) = 0, 
\end{equation}
along with the appropriate boundary conditions. To prove the convergence result we first require the Lemma from \cite{mandal2026convergence} stated below.

\begin{lemma}\label{newsi}
For $\gamma, \beta, \lambda >0$ we have,
\begin{equation*}
  \begin{aligned}
\mathcal{L}^{-1}\left\{ e^{-\gamma \sqrt{s^2+\beta^2-\lambda e^{-\tau s}}}\right\} &= \left[ \delta(t - \gamma) - \frac{\beta \gamma J_1(\beta \sqrt{t^2 - \gamma^2})}{\sqrt{t^2 - \gamma^2}} \right] H(t - \gamma) \\
&+ \frac{\gamma \lambda}{2} J_0(\beta \sqrt{(t - \tau)^2 - \gamma^2}) H(t - \tau - \gamma) \\
&+ \frac{\gamma \lambda^2}{8} \left[ \frac{t - 2\tau - \gamma}{\beta(t - 2\tau + \gamma)} J_1(\beta \sqrt{(t - 2\tau)^2 - \gamma^2})\right. \\
&+ \left.\frac{\gamma}{2} J_0(\beta \sqrt{(t - 2\tau)^2 - \gamma^2}) \right] H(t - 2\tau - \gamma) + \dots\\
&=: \Psi(\gamma,\beta,\lambda,\tau,t)
\end{aligned}  
\end{equation*}
where $J_\nu(z)$ is the Bessel functions of the first kind.
\end{lemma}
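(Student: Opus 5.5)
The plan is to expand the exponent in powers of the delay kernel $e^{-\tau s}$ and to invert the resulting series term by term with classical Laplace pairs. Write
\[
\sqrt{s^2+\beta^2-\lambda e^{-\tau s}}=\sqrt{s^2+\beta^2}\Bigl(1-\frac{\lambda e^{-\tau s}}{s^2+\beta^2}\Bigr)^{1/2}
=\sqrt{s^2+\beta^2}-\frac{\lambda e^{-\tau s}}{2\sqrt{s^2+\beta^2}}-\frac{\lambda^2 e^{-2\tau s}}{8(s^2+\beta^2)^{3/2}}-\cdots,
\]
which is valid for $\operatorname{Re} s$ large, where $|\lambda e^{-\tau s}|<|s^2+\beta^2|$. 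Multiply by $-\gamma$ and exponentiate. Expanding the factor $\exp\bigl(\gamma\lambda e^{-\tau s}/(2\sqrt{s^2+\beta^2})+\dots\bigr)$ as a power series and collecting terms by powers of $e^{-\tau s}$ gives
\[
e^{-\gamma\sqrt{s^2+\beta^2}}\Bigl[1+\frac{\gamma\lambda}{2}\frac{e^{-\tau s}}{\sqrt{s^2+\beta^2}}+\lambda^2 e^{-2\tau s}\Bigl(\frac{\gamma}{8(s^2+\beta^2)^{3/2}}+\frac{\gamma^2}{8(s^2+\beta^2)}\Bigr)+O(e^{-3\tau s})\Bigr].
\]
Each term now has the form $e^{-p\tau s}F_p(s)$. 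By the second translation theorem \eqref{eq_7}, it suffices to invert each $F_p$ and then shift by $p\tau$. This shift is what produces the Heaviside factors $H(t-p\tau-\gamma)$.

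For $p=0$ I would use the standard pair
\[
\mathcal L^{-1}\{e^{-\gamma\sqrt{s^2+\beta^2}}\}=\delta(t-\gamma)-\frac{\beta\gamma J_1(\beta\sqrt{t^2-\gamma^2})}{\sqrt{t^2-\gamma^2}}H(t-\gamma).
\]
It follows from the pair $\mathcal L^{-1}\{e^{-\gamma\sqrt{s^2+\beta^2}}/\sqrt{s^2+\beta^2}\}=J_0(\beta\sqrt{t^2-\gamma^2})H(t-\gamma)$ by differentiating in $\gamma$. The same $J_0$ pair, shifted by $\tau$, gives the $p=1$ term $\frac{\gamma\lambda}{2}J_0(\beta\sqrt{(t-\tau)^2-\gamma^2})H(t-\tau-\gamma)$ directly. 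Both pairs can be checked via the Efros theorem \eqref{efros} with $q(s)=\sqrt{s^2+\beta^2}$, or taken from tables.

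For $p=2$ I need the inverses of $e^{-\gamma\sqrt{s^2+\beta^2}}(s^2+\beta^2)^{-1}$ and $e^{-\gamma\sqrt{s^2+\beta^2}}(s^2+\beta^2)^{-3/2}$. The general pair is
\[
\mathcal L^{-1}\Bigl\{\frac{e^{-\gamma\sqrt{s^2+\beta^2}}}{(s^2+\beta^2)^{(\nu+1)/2}}\Bigr\}=\Bigl(\frac{t-\gamma}{t+\gamma}\Bigr)^{\nu/2}\frac{J_\nu(\beta\sqrt{t^2-\gamma^2})}{\beta^\nu}H(t-\gamma).
\]
I would obtain it by integrating the $J_0$ pair in $\gamma$, or from tables. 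With $\nu=1$ and a $2\tau$ shift, this pair produces the $J_1$ term stated in the lemma. The $J_0$-type contribution with coefficient $\gamma/2$ I would trace to the $\gamma^2$ term, after checking the bookkeeping against the lemma's normalisation.

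The main obstacle is justifying the term-by-term inversion. The series converges only for $\operatorname{Re} s$ large, so I must argue that the inverse transform of the series equals the sum of the inverses. The time shifts handle this cleanly: on any finite window $t\le T$, only the finitely many terms with $p\tau+\gamma<T$ are nonzero. So the identity reduces to a finite sum plus a remainder whose inverse is supported in $t\ge (p+1)\tau+\gamma$. This remainder bound is exactly what the subsequent 2D finite-step argument requires.
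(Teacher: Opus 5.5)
The paper imports this lemma without proof, so I compare your attempt with the statement itself. Write $q=\sqrt{s^2+\beta^2}$ and $r=\sqrt{t^2-\gamma^2}$.

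What you have right: the expansion in powers of $e^{-\tau s}$, the $p=0$ and $p=1$ inversions, and the truncation idea. To make the truncation rigorous, write the order-$P$ remainder as $\lambda^{P+1}e^{-((P+1)\tau+\gamma)s}G_P(s)$ with $G_P$ analytic and bounded on a right half-plane. This holds because $e^{-\gamma(q-s)}$ is bounded there; then apply Paley--Wiener.

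The gap is the $p=2$ step. The ``general pair'' you quote is false: at $\gamma=0$, $\nu=1$ it would give $\mathcal{L}^{-1}\{(s^2+\beta^2)^{-1}\}=J_1(\beta t)/\beta$ instead of $\sin(\beta t)/\beta$. The correct table entry is $\mathcal{L}\{(\frac{t-\gamma}{t+\gamma})^{\nu/2}J_\nu(\beta r)H(t-\gamma)\}=\beta^\nu e^{-\gamma q}/(q(s+q)^\nu)$. Integrating the $J_0$ pair in $\gamma$ gives $\mathcal{L}^{-1}\{e^{-\gamma q}q^{-2}\}=H(t-\gamma)\int_\gamma^t J_0(\beta\sqrt{t^2-u^2})\,du$, which is not your expression.

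Even granting your pair, the matching fails:
\begin{itemize}
\item $\nu=1$ produces $(\frac{t-\gamma}{t+\gamma})^{1/2}$, not the first power appearing in the lemma.
\item It belongs to the $q^{-2}$ term, whose coefficient in your expansion is $\gamma^2\lambda^2/8$ rather than $\gamma\lambda^2/8$.
\item $J_0(\beta r)H(t-\gamma)$ inverts $e^{-\gamma q}q^{-1}$, not $e^{-\gamma q}q^{-2}$. So the $\frac{\gamma}{2}J_0$ piece (net coefficient $\gamma^2\lambda^2/16$) cannot come from your $\gamma^2$ term (coefficient $\gamma^2\lambda^2/8$).
\end{itemize}

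The ``bookkeeping'' you defer is exactly where the argument breaks, because the printed $\lambda^2$ term is not the inverse transform. Differentiating $\mathcal{L}\{J_0(\beta r)H(t-\gamma)\}=e^{-\gamma q}/q$ with respect to $\beta$ gives
\[
\mathcal{L}\Bigl\{\frac{r}{\beta}J_1(\beta r)H(t-\gamma)\Bigr\}=e^{-\gamma q}\Bigl(\frac{\gamma}{q^{2}}+\frac{1}{q^{3}}\Bigr),
\]
which is precisely your $\lambda^2$ coefficient up to the factor $\frac{\gamma\lambda^2}{8}e^{-2\tau s}$. Hence your method yields $\frac{\gamma\lambda^2}{8\beta}\,r_2J_1(\beta r_2)H(t-2\tau-\gamma)$, where $r_p:=\sqrt{(t-p\tau)^2-\gamma^2}$. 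This disagrees with the lemma in two checkable ways:
\begin{itemize}
\item It is continuous at $t=2\tau+\gamma$, as it must be since the coefficient is $e^{-(2\tau+\gamma)s}O(s^{-2})$. The lemma's bracket instead jumps to $\gamma/2$ there.
\item After dividing by $\gamma$ and letting $\gamma\to0^+$, yours tends to $tJ_1(\beta t)/\beta=\mathcal{L}^{-1}\{q^{-3}\}$, while the lemma's tends to $J_1(\beta t)/\beta$.
\end{itemize}
So the third term as printed cannot be proved; you should state and prove a corrected version rather than try to match it.

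Taylor expanding in $\mu=\beta^2$, with $\partial_\mu=(2\beta)^{-1}\partial_\beta$ applied to $\mathcal{L}^{-1}\{e^{-\gamma q}\}$, gives every term at once: for $p\ge1$ the $p$-th term is $\frac{\gamma\lambda^p}{2^p p!}(r_p/\beta)^{p-1}J_{p-1}(\beta r_p)H(t-p\tau-\gamma)$. The property the 2D theorem actually uses, namely that all terms vanish for $t<\gamma$, is correctly established by your argument.
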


\begin{theorem}[DNWR Convergence in 2D]
Let the relaxation parameter \( \theta = 1/2 \). For a fixed time window length \( T \in (0,\infty)\), the DNWR algorithm (\ref{eq_12})-(\ref{eq_15}) achieves convergence in at most \( k + 1 \) iterations, provided
$$
T/k <  d_{\min}/c,
$$
where \( c \) denotes the wave propagation speed.
\end{theorem}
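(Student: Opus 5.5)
The plan is to reduce the 2D problem mode by mode to the 1D multi-subdomain analysis of Theorem~\ref{thm1}, and then use Lemma~\ref{newsi} in place of the plain shifting property \eqref{eq_7}. Expanding the errors and interface data in the sine series in $y$ gives $\kappa_i^k(y,t)=\sum_{n\ge1}K_i^k(n,t)\sin(ny)$. By \eqref{2d_to_1d}, each Fourier coefficient $E_i^k(x,n,t)$ solves a 1D delayed wave equation with the extra reaction term $c^2n^2E_i^k$. The transmission conditions in (\ref{eq_12})--(\ref{eq_15}) are linear and act pointwise in $y$, so they hold coefficientwise. For each fixed $n$ we therefore obtain exactly the 1D DNWR iteration (\ref{eq_2})--(\ref{eq_5}), with interface data $K_i^{k}(n,\cdot)$.

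Next, take the Laplace transform in $t$. The only change from the proof of Theorem~\ref{thm1} is that $\zeta$ is replaced by
\[
\zeta_n=\frac{\sqrt{s^2+c^2n^2-\lambda e^{-\tau s}}}{c}.
\]
All algebraic manipulations carry over verbatim with $\alpha_i=\sinh(d_i\zeta_n)$ and $\beta_i=\cosh(d_i\zeta_n)$. These include the recursions for $\hat w$ and $\hat h$, the rescaled variables $\bar h,\bar w$, the iteration matrix $M$, and the coefficients $\hat\gamma_{i,l}$. Iterating $k$ times and expanding every $\hat\gamma$ in geometric series of $e^{-2d_l\zeta_n}$, as done there, gives
\[
\hat K_i^k(n,s)=\sum_j \hat p^k_{i,j}(n,s)\,\hat K^0_j(n,s).
\]
Here each $\hat p^k_{i,j}$ is a (convergent) linear combination of terms $e^{-\gamma\zeta_n}$ with $\gamma\ge k d_{\min}$. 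This exponent bound is the same counting argument as in 1D: each application of $M$ contributes at least one factor of order $e^{-d_l\zeta_n}$.

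The new ingredient is the inverse transform. Since $e^{-\gamma\zeta_n}=e^{-(\gamma/c)\sqrt{s^2+(cn)^2-\lambda e^{-\tau s}}}$, Lemma~\ref{newsi} applies with $\gamma/c$ in place of $\gamma$ and $\beta=cn$. It gives $\mathcal L^{-1}\{e^{-\gamma\zeta_n}\}=\Psi(\gamma/c,cn,\lambda,\tau,t)$. Every term of $\Psi$ carries a factor $H(t-\gamma/c-r\tau)$ with $r\ge0$, so it vanishes for $t<\gamma/c$. Because each $\hat p^k_{i,j}$ multiplies $\hat K^0_j$, the time-domain product becomes a convolution \eqref{convolution} of $K_j^0$ with a kernel supported in $t\ge k d_{\min}/c$. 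The convolution therefore vanishes on $(0,T)$ whenever $T<k d_{\min}/c$. The strict inequality is convenient here, since the kernel contains a $\delta(t-\gamma/c)$ and the Bessel parts at the endpoint. Hence $K_i^k(n,t)=0$ on $(0,T)$ for every $n$ and every $i$. Summing the sine series gives $\kappa_i^k\equiv0$, and one further subdomain solve with zero data gives zero error, i.e.\ convergence in $k+1$ iterations.

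The main obstacle is justifying the termwise inverse transform. Two things are needed: that the infinite geometric expansions of the $\hat\gamma$'s can be inverted term by term, and that Lemma~\ref{newsi} (itself an infinite series in $\lambda$) sums to a function with the stated support, uniformly in $n$. I would handle this in two steps. First, restrict to $\operatorname{Re}s$ large, where $|e^{-2d_l\zeta_n}|<1$ uniformly in $n$, so the expansions converge absolutely. Second, note that on the finite window $(0,T)$ only finitely many terms have support starting before $T$, because the supports are shifted by multiples of $2d_l/c$ and $\tau$. The remaining tail then contributes nothing on $(0,T)$, which avoids any delicate convergence estimate.
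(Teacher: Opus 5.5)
Your proposal is correct and follows essentially the same route as the paper. Both arguments expand in $\sin(ny)$ so that each mode is the 1D iteration with $\zeta$ replaced by $\sqrt{s^2+c^2n^2-\lambda e^{-\tau s}}/c$, reuse the exponent bound $\gamma\ge k d_{\min}$ from Theorem~\ref{thm1}, and invert with Lemma~\ref{newsi} to get convolution kernels supported in $t\ge k d_{\min}/c$. Your version is somewhat more careful than the paper's, which keeps only the leading $\lambda$-term of $\Psi$: you note instead that every term of $\Psi$ carries a factor $H(t-\gamma/c-r\tau)$ with $r\ge 0$, and you sketch why term-by-term inversion is legitimate.
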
\label{thm2}

\begin{proof}
Applying the Laplace transform in time of equation (\ref{2d_to_1d}) results in,
\[
c^2 \frac{d^2 \hat{E}^k_i}{dx^2} =(s^2 + c^2 n^2-\lambda e^{-\tau s}) \, \hat{E}^k_i(x, s).
\]
For each Fourier mode 
$n$, we proceed as in the one-dimensional case treated in Theorem \ref{thm1}, where the interface functions satisfy recurrence relations of the form:

\begin{equation*}
\hat{h}^k_i(s) = \sum_j B^{(k)}_{ij}\left( \sqrt{s^2-\lambda e^{-\tau s}} \right) \ \, \hat{h}^0_j(s).    
\end{equation*}
When extending to two dimensions, the update conditions are reformulated separately for each Fourier mode 
\( n = 1,2,\ldots \), as follows:
\begin{equation}\label{updatestep2d}
    \hat \kappa^k_i(n, s) = \sum_j B^{(k)}_{ij}\left( \sqrt{s^2 + c^2 n^2-\lambda e^{-\tau s}} \right) \, \hat\kappa^0_j(n, s).
\end{equation}
Once again, we represent the interface values in the Laplace space as linear combinations of the initial data, weighted by coefficients
$B_{ij}^k\!\left( \sqrt{s^2 + c^2 n^2 - \lambda e^{-\tau s}} \right).$
In the one-dimensional case, the coefficients
$B^{(k)}_{ij}\!\left( \sqrt{s^2 - \lambda e^{-\tau s}} \right)$
are linear combinations of exponential functions of the form $e^{-\alpha \zeta}$, where
$\zeta = \frac{\sqrt{s^2 - \lambda e^{-\tau s}}}{c}.$
By applying the Efros theorem \eqref{efros} together with an exponential series expansion, we observe that the presence of the $\lambda$-term introduces an additional delay effect. Consequently, we retain only the leading term in the expansion and neglect higher-order contributions involving $\lambda$.
With this modification, the coefficients
$B^{(k)}_{ij}\!\left( \sqrt{s^2 + c^2 n^2 - \lambda e^{-\tau s}} \right)$
can be written as sums of exponential terms of the form
$e^{-\gamma \sqrt{s^2 + c^2 n^2 - \lambda e^{-\tau s}}},$
where $\gamma \ge \frac{k d_l}{c}$.

Using the definition of $\Psi(\gamma,\beta,\lambda,\tau,t)$ given in Lemma \ref{newsi}, we compute the inverse Laplace transform of \eqref{updatestep2d} and obtain
\begin{equation*}
\kappa_i^{k}(n,t)
=
\sum_{j} \sum_{l}
\Psi(\rho_{i,l,j,k}, cn, \lambda, \tau, t)
*
\kappa_j^{0}(n,t),
\end{equation*}
with $\rho_{i,l,j,k} \ge k d_{\min}/c$.
Therefore, for $t < k d_{\min}/c$, we have
$\kappa_i^{k}(n,t) = 0$
for every mode $n$.

Hence, whenever $t < k d_{\min}/c$, one additional iteration eliminates all interface discrepancies, and the DNWR algorithm recovers the exact solution throughout the entire spatial domain.
\end{proof}

 \section{Numerical Illustration}\label{sec_5}
 For validating our theoretical findings we implement DNWR algorithm using the Leapfrog scheme in both 1D and 2D spatial domain. The time discretization is chosen such that the delay parameter $\tau$ is expressed as an integer multiple of the time step $\Delta t$, i.e., $\tau = l \times\Delta t$. Under this assumption, the discrete scheme can be written as
\[
u_i^{n+1}
=
2u_i^n - u_i^{n-1}
+ \alpha^2 \left( u_{i+1}^n - 2u_i^n + u_{i-1}^n \right)
+ \Delta t^2 \lambda\, u_i^{n-l},
\]
where,
$\alpha = \frac{c\,\Delta t}{\Delta x}$
satisfies the CFL condition $\alpha \leq 1$. For all numerical experiments, the parameters are set to $c=1, \tau = 3$ and $\lambda=1.6$ and the initial interface guesses are taken as $h_i^0(t)=t^2, t\in(0,T]$, unless otherwise specified.

 \subsection{DNWR in 1D Mutisubdomain}
 We use discritization in space with step size $\Delta x=0.025$ and  in time with time step $\Delta t=0.025$. For numerical experiments we have divided spatial domain $\Omega=(0,6)$ into five subdomains where in the first case subdomains are taken as $\Omega_1=(0,1),\Omega_2=(1,2.5),\Omega_3=(2.5,3),\Omega_4=(3,5)$ and $\Omega_5=(5,6)$ with $h_{min}=0.5$; in the second case, the domain is partitioned into $\Omega_1=(0,1),\Omega_2=(1,2.5),\Omega_3=(2.5,3.5),\Omega_4=(3.5,5)$ and $\Omega_5=(5,6)$ with $h_{min}=1$. Error plots are included in Fig. \ref{diftime_wave_aar1}, \ref{diftime_wave_arr2} and \ref{diftime_wave_arr3} for arrangements 1, 2 and 3, respectively. Specifically, for a final time of $T=3$, wave speed $c=1$, and minimum subdomain size $d_{min}=0.5$, we calculate the ratio $cT/d_{min} = 6$. This yields $k=6$, implying that the convergence is achieved in at most $k+1 = 7$ iterations. \\
 We also run an experiment with different transmission conditions in different interfaces for arrangement~3 with $h_1^0=\tan(t)$, $h_2^0=\sin(t)$, $h_3^0=t^2$ and $h_4^0=t^3$ respectively, see Fig.\ref{diftime_wave_dif_hi}. We plot error curves in Fig. \ref{diffno_wave} for different numbers of subdomains (3, 5, 7 and 9) for time $T=12$ with $d_{min}=1$ in all cases and observe that maximum number of iteration required is at most $k+1$, consistent with the result established in Theorem \ref{thm1}.\\ 

 \begin{remark}
     Numerical experiments for arrangements 1 and 2 ensure that the convergence results are exactly the same across arrangements, although none of the arrangement is completely parallel. Arrangement~1 is sequential, while Arrangements 2 and 3 are partially parallel. For parallel implementation of Arrangement 3 for the classical heat equation, see \cite{Ong2016PipelineIO}.
 \end{remark}

\begin{figure}[!h]
    \centering
    \includegraphics[width=0.462\linewidth]{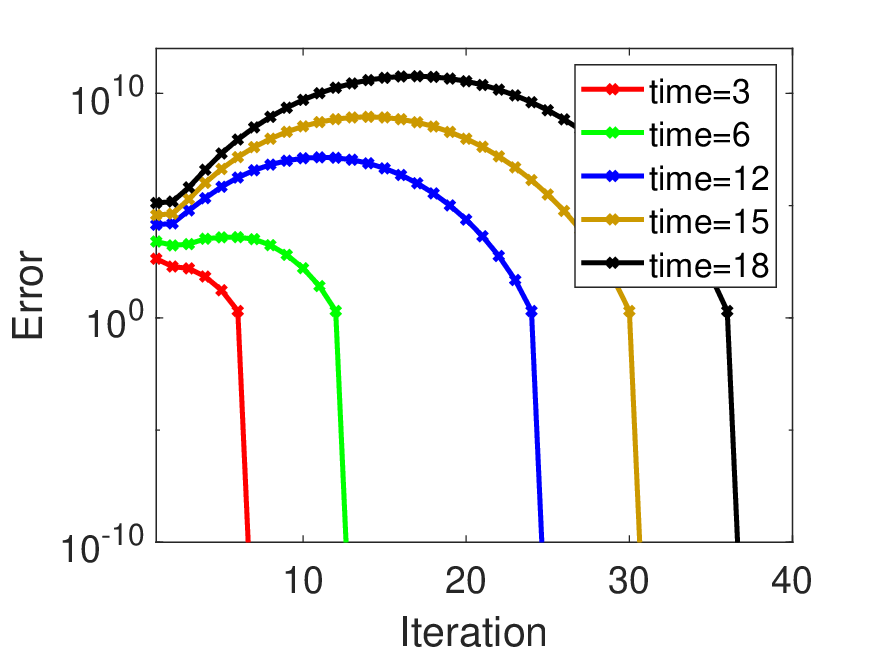}
    \includegraphics[width=0.462\linewidth]{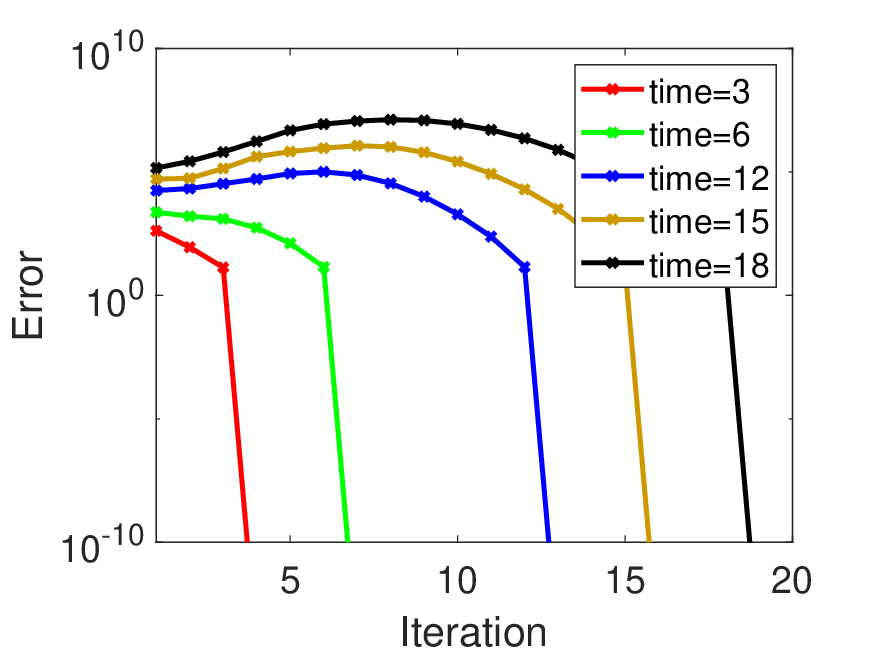}
    \caption{\textbf{Arrangement 1}: Convergence of DNWR methods for 5 subdomains. Left: Min subdomain length is $0.5$, Right: Min subdomain length is $1$.}
    \label{diftime_wave_aar1}
\end{figure}
\begin{figure}[!h]
    \centering
    \includegraphics[width=0.462\linewidth]{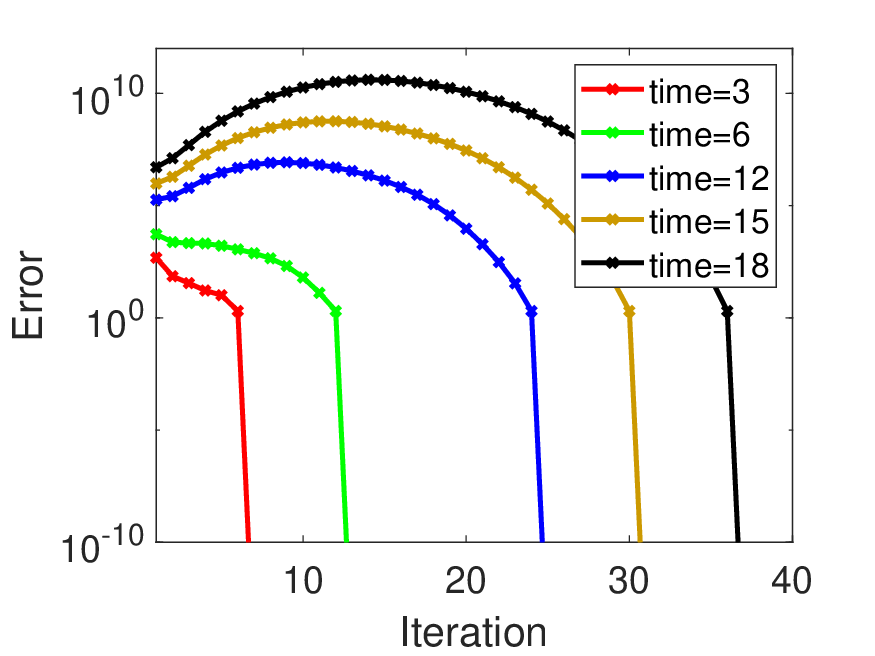}
    \includegraphics[width=0.462\linewidth]{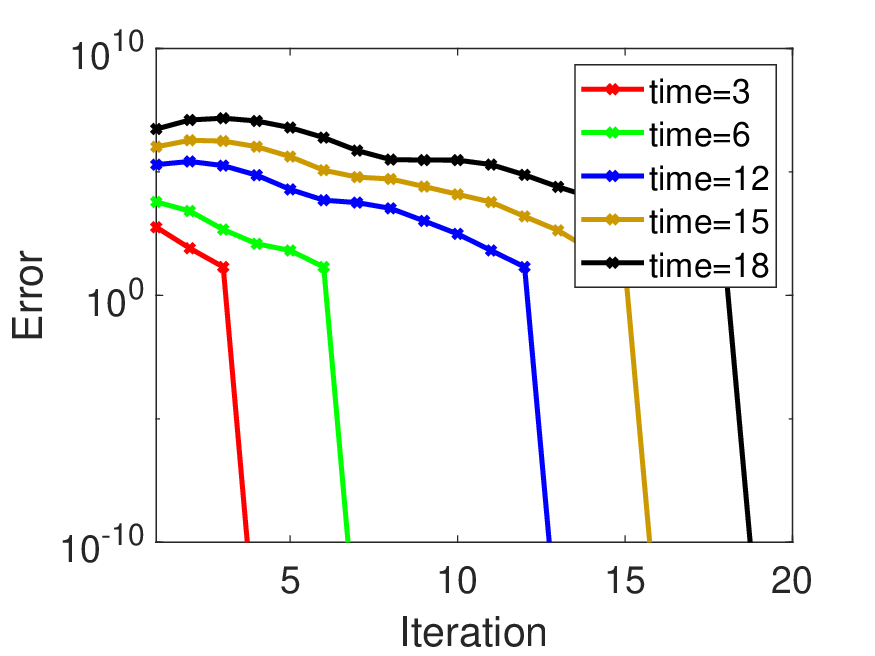}
    \caption{\textbf{Arrangement 2}: Convergence of DNWR methods for 5 subdomains. Left: Min subdomain length is $0.5$, Right: Min subdomain length is $1$.}
    \label{diftime_wave_arr2}
\end{figure}

 \begin{figure}[!h]
    \centering
    \includegraphics[width=0.462\linewidth]{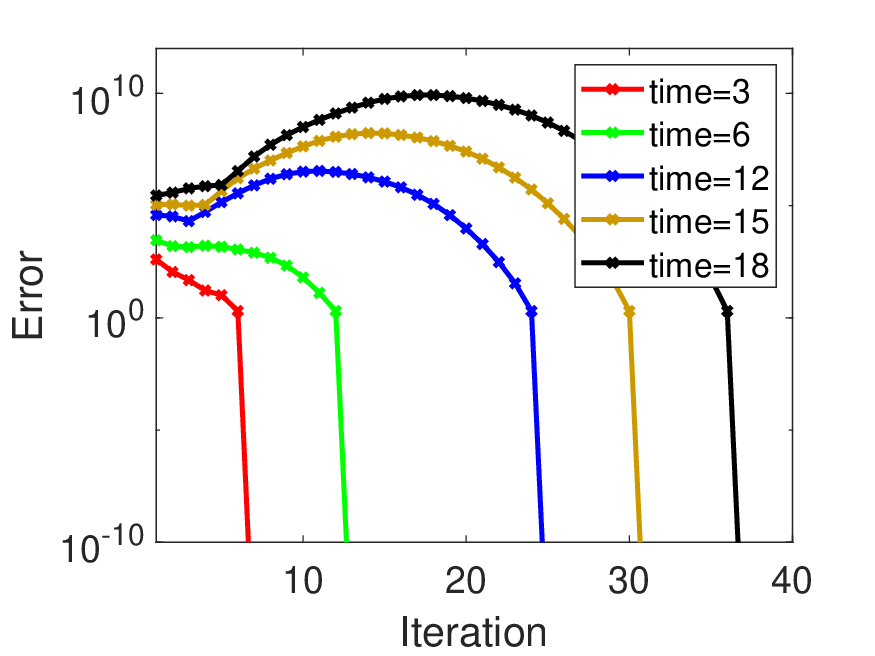}
    \includegraphics[width=0.462\linewidth]{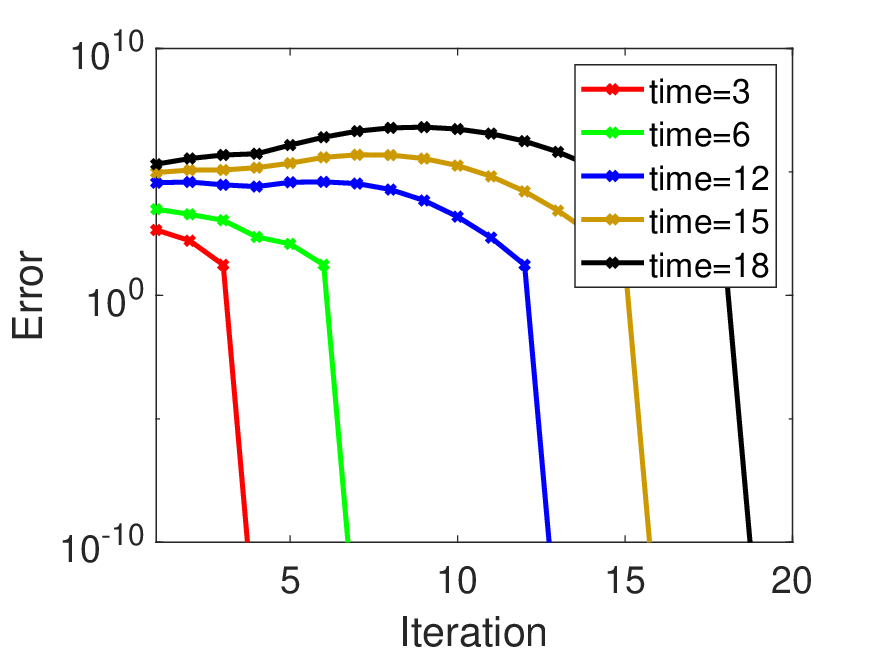}
    \caption{ \textbf{Arrangement 3}: Convergence of DNWR methods for 5 subdomains. Left: Min subdomain length is $0.5$, Right: Min subdomain length is $1$.}
    \label{diftime_wave_arr3}
\end{figure}
 \begin{figure}[!h]
    \centering
    \includegraphics[width=0.462\linewidth]{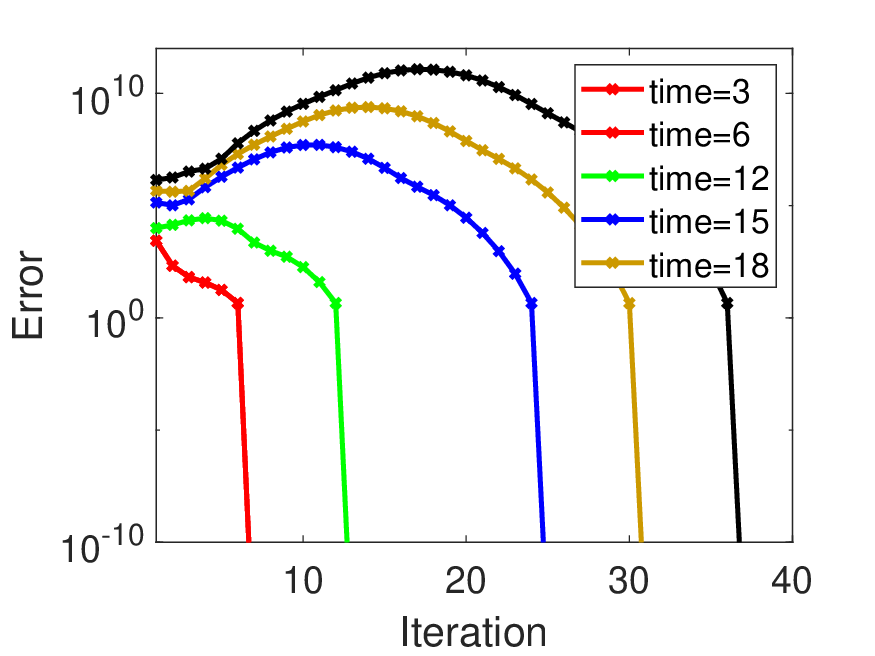}
    \includegraphics[width=0.462\linewidth]{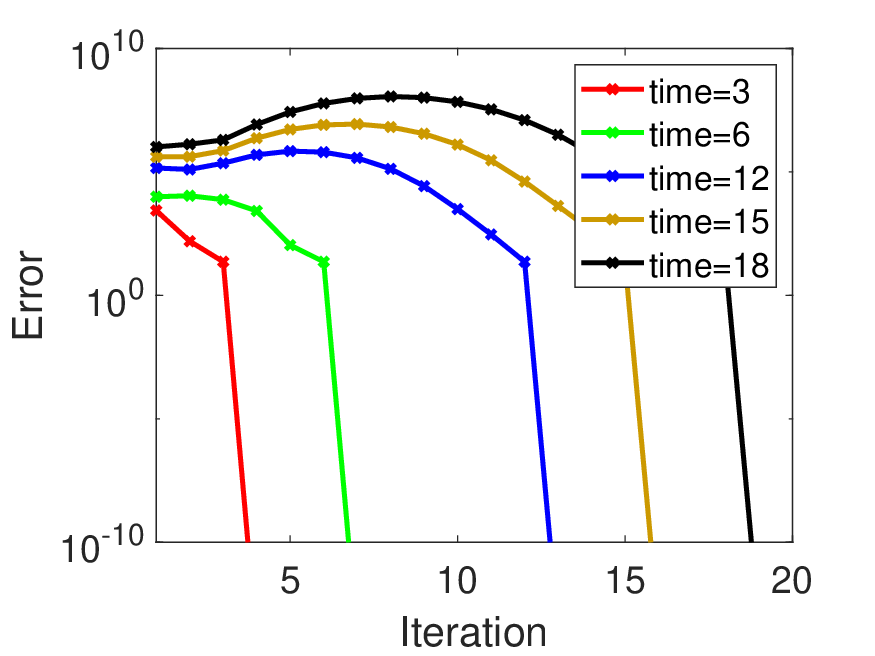}
    \caption{ \textbf{Arrangement 3}: Convergence of DNWR methods
    with different interface conditions. Left: Min subdomain length is $0.5$, Right: Min subdomain length is $1$.}
    \label{diftime_wave_dif_hi}
\end{figure} 
\begin{remark}
    We obtain the condition $T/k < d_{\min}/c$ for both one- and two-dimensional problems in a multi-subdomain setting. Notably, this condition is independent of the delay term $\tau$. Therefore, the magnitude of the delay does not influence the convergence behavior (see Fig.~\ref{difdelay_wave_arr2}). 
\end{remark}

\begin{figure}[!h]
    \includegraphics[width=0.462\linewidth]{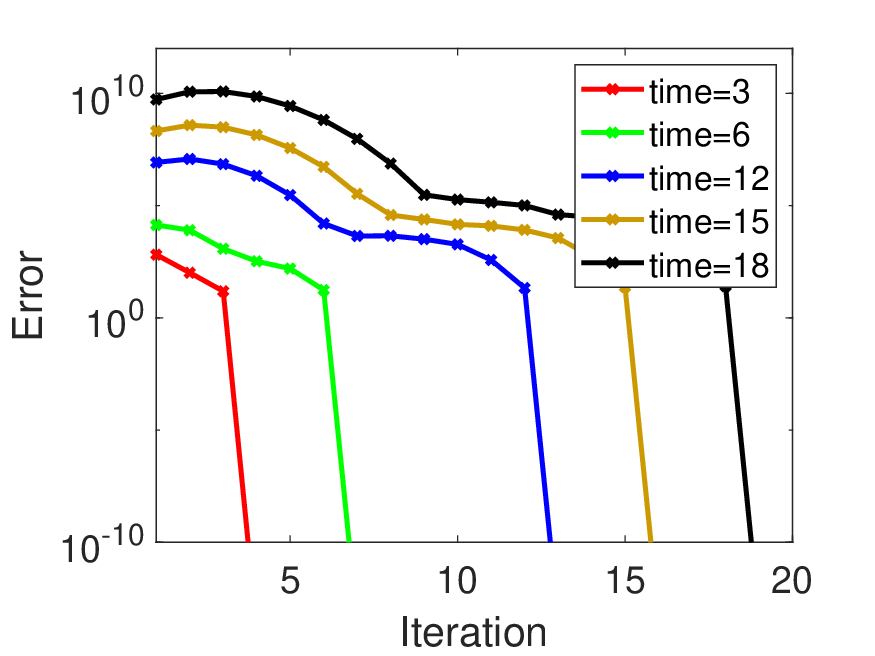}
    \includegraphics[width=0.462\linewidth]{dnwrarr2_5subhmin_one.eps}
    \caption{\textbf{Arrangement 3}: Convergence of DNWR methods for 5 subdomains with $d_{min}=1$. Left:  $\tau$ is $0.3$; Right: $\tau$ is $3$.}
    \label{difdelay_wave_arr2}
\end{figure}

\begin{figure}
    \centering
    \includegraphics[width=0.462 \linewidth]{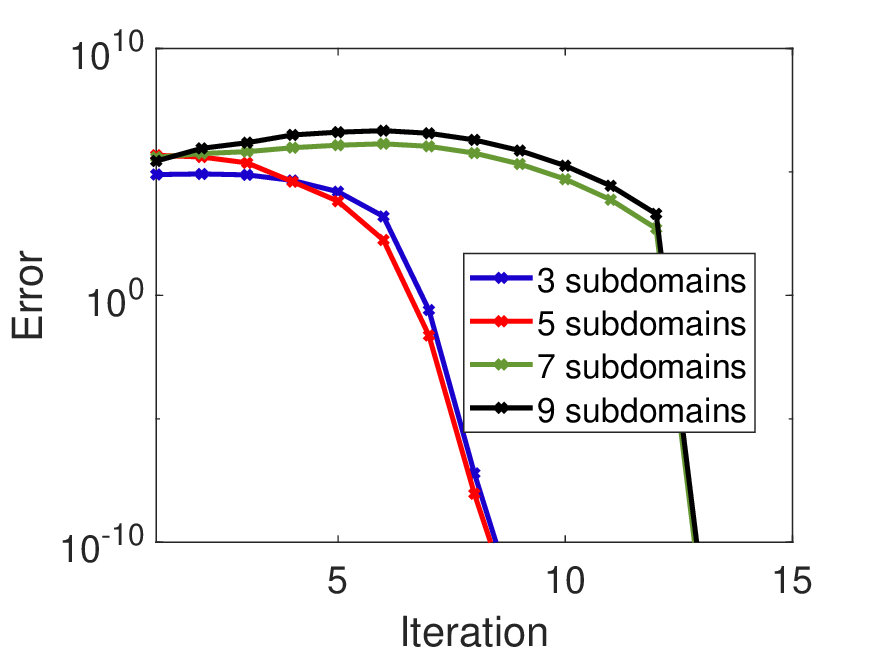}
    \includegraphics[width=0.462 \linewidth]{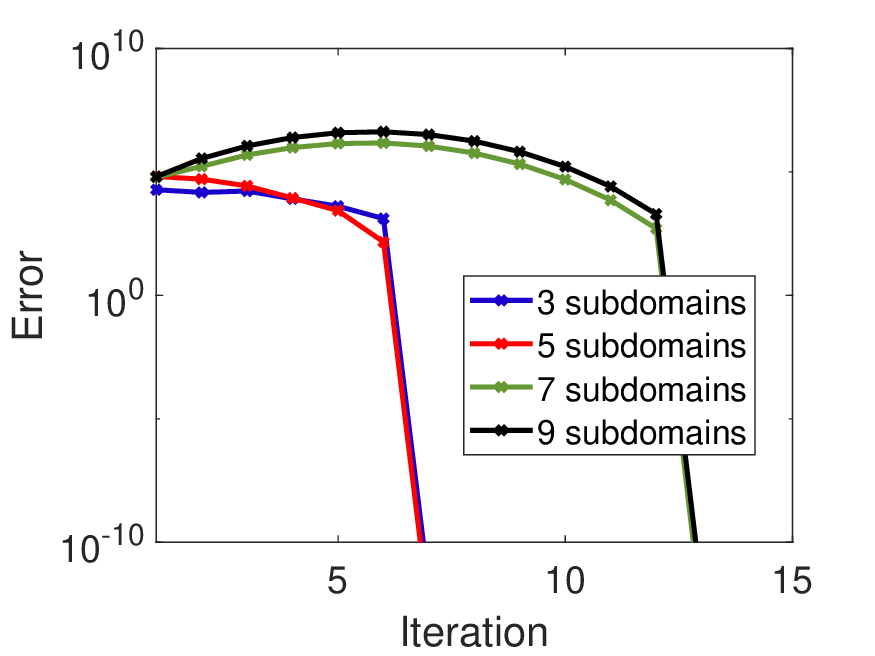}
    \caption{DNWR for multiple subdomains for Arrangement 3 $d_{min}=1$ and $T=12.$. Left: $\tau=1$, Right: $\tau=3$.}
     \label{diffno_wave}
    \end{figure}

\subsection{DNWR in 2D Multisubdomain}
For the implementation of DNWR in 2D, the mesh size are taken as $\Delta x =0.1=\Delta y$ and the time step is $\Delta t=0.025.$ The spatial domain $\Omega=(0,6)\times(0,6)$ is divided into three subdomains.\\
\textbf{Case I -} We considered $\Omega_1=(0,3)\times(0,6)$, $\Omega_2=(3,5.5)\times(0,6)$ and $\Omega_3 =(5.5,6)\times (0,6)$ with minimum subdomain length $d_{min}=0.5$.\\
\textbf{Case II -} We considered $\Omega_1=(0,3.5)\times (0,6),$ $\Omega_2=(3.5,5)\times (0,6)$ and $\Omega_3=(5,6)\times (0,6)$ with $d_{min}=1.$ (See Fig. \ref{diftime_wave2D_dif_hi}.)

\begin{figure}[!h]
    \centering
    \includegraphics[width=0.462\linewidth]{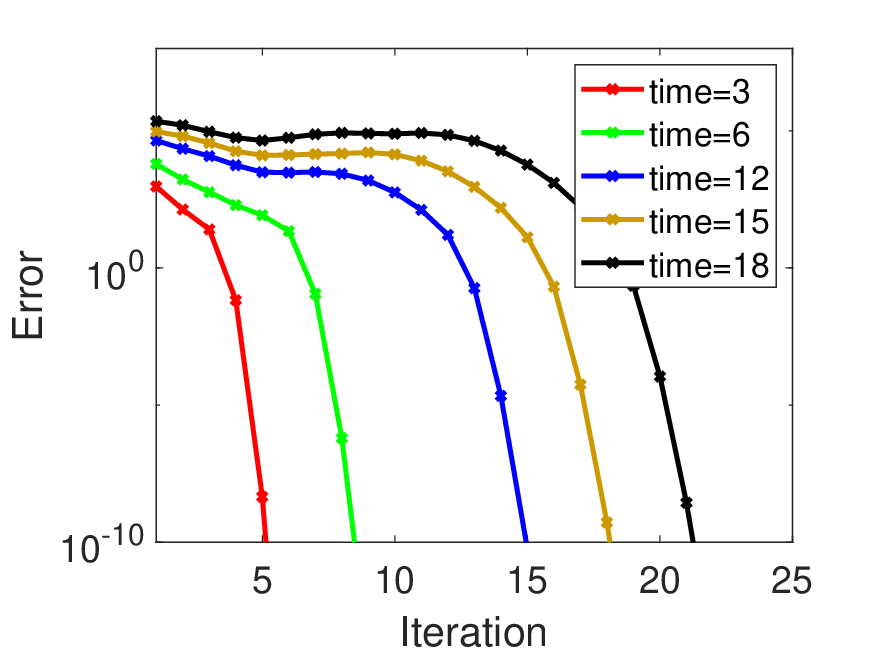}
     \includegraphics[width=0.462\linewidth]{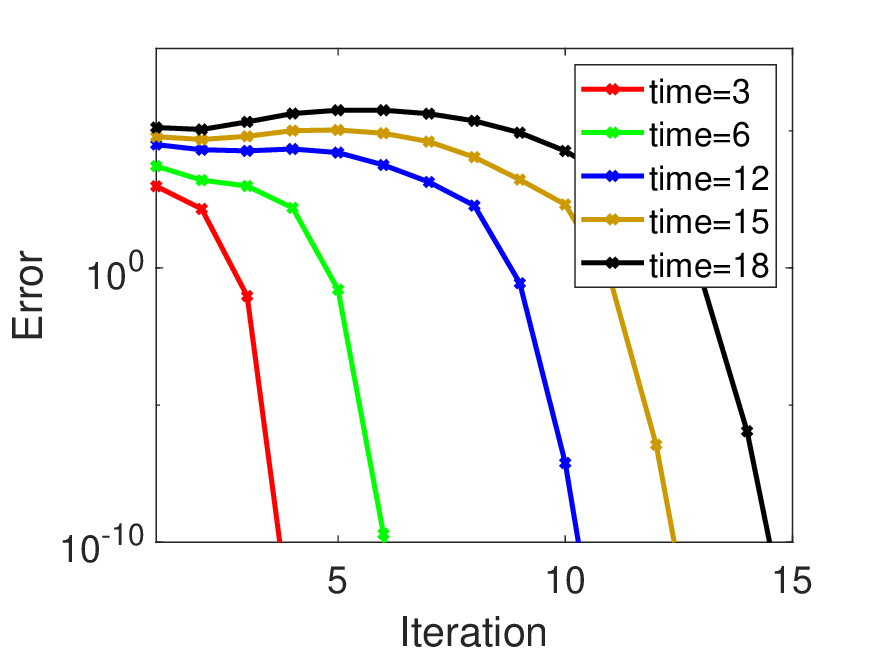}
    \caption{ \textbf{Arrangement 3}: Convergence of DNWR methods
    with different interface conditions in 2D. Left: Min subdomain length is $0.5$, Right: Min subdomain length is $1$.}
       \label{diftime_wave2D_dif_hi}
\end{figure}
\subsection{Comparative Study}
We evaluated the performance of DNWR and NNWR against both classical and optimized SWR methods using a three-subdomain configuration. The experimental results indicate that both DNWR and NNWR achieve faster convergence than classical SWR in both 1D and 2D scenarios; moreover, classical SWR converges only with an overlap. In the 1D case, the convergence rate of the optimized SWR is comparable to that of DNWR and NNWR. However, in 2D simulations, both DNWR and NNWR significantly outperform the optimized SWR approach. 

For these experiments, the overlap is set to $4 \times \Delta x$ and the numerical error curves are shown in Fig.~\ref{compare_wave}. For optimised SWR we are using absorbing boundary conditions \cite{OSWR1Dwave} as the transmission condition. The parameters considered are: a delay value of $\tau = 3$, a time window length of $T = 3$, and a minimum subdomain width of $d_{\min} = 1$. The discretization parameters are configured as follows:
\begin{itemize}
    \item \textbf{In 1D:} $\Delta t = 0.1$ and $\Delta x = 0.1$. The spatial domain $\Omega = (0,6)$ is partitioned into three subdomains: $\Omega_1 = (0,1)$, $\Omega_2 = (1,3)$, and $\Omega_3 = (3,6)$.
    \item \textbf{In 2D:} $\Delta t = 0.07$ and $\Delta x = \Delta y = 0.1$. The spatial domain $\Omega = (0,6) \times (0,6)$ is divided into $\Omega_1 = (0,1) \times (0,6)$, $\Omega_2 = (1,3) \times (0,6)$, and $\Omega_3 = (3,6) \times (0,6)$.
\end{itemize}
Table \ref{tab:comparison} summarises the convergence properties of different WR methods for the wave equation (even with delay).

\begin{table}[h]
\centering
\caption{Comparison of WR methods}
\label{tab:comparison}
\begin{tabular}{lcc}
\toprule
Method & 2 subdomains & Many subdomains \\
\midrule
Classical SWR (overlap $\delta$) & $k > \dfrac{cT}{\delta}$ & $k > \dfrac{cT}{\delta_{\min}}$ \\
Optimized SWR (exact BC) & Finite iterations & Finite iterations \\
DNWR ($\theta=1/2$) & $k > \dfrac{cT}{2\min\{a,b\}}$ & $k > \dfrac{cT}{d_{\min}}$ \\
NNWR ($\theta=1/4$) & $k > \dfrac{cT}{4\min\{a,b\}}$ & $k > \dfrac{cT}{2d_{\min}}$ \\
\bottomrule
\end{tabular}
\end{table}
In practice, $\delta$ is usually chosen as a small fraction of the subdomain size to minimize computational overhead. The table shows that DNWR and NNWR converge in fewer iterations.

\begin{figure}[!h]
    \centering
    \subfloat{\includegraphics[width=0.462\linewidth]{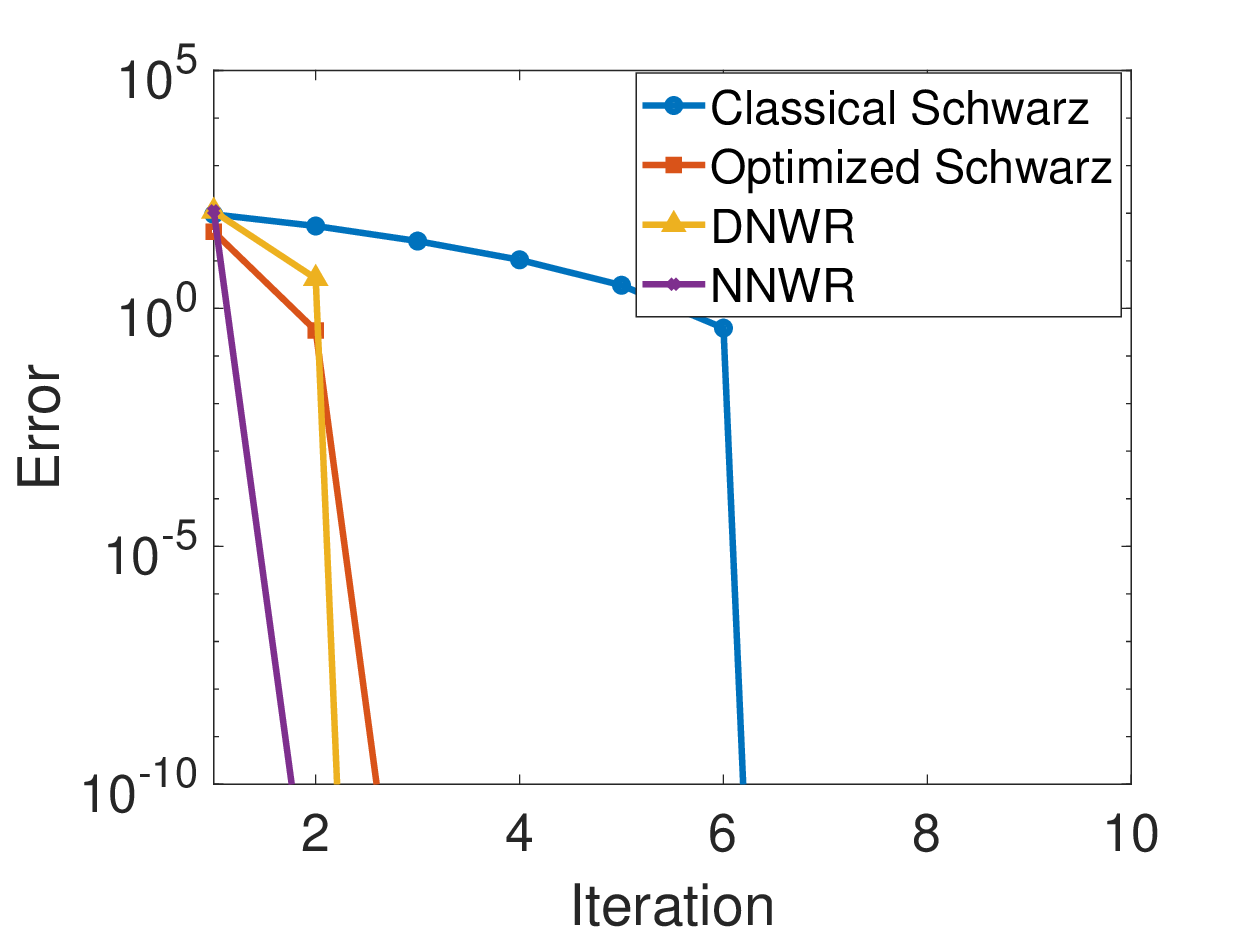}}
    \qquad
    \subfloat{\includegraphics[width=0.462\linewidth]{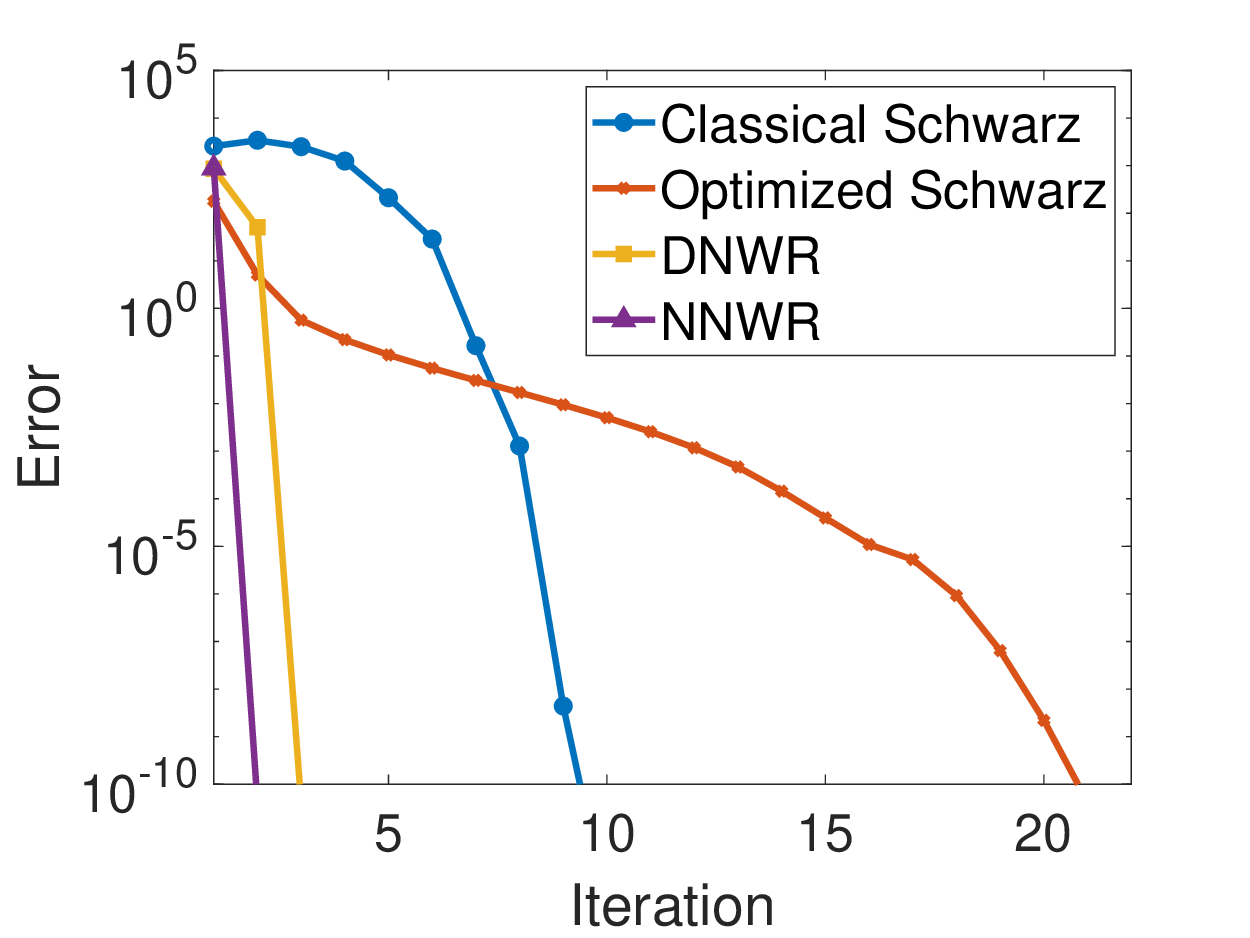}}
    \caption{ Comparison of DNWR and NNWR with Classical SWR and Optimised SWR, Left: 1D 3 subdomain; Right: 2D 3 subdomain.}
       \label{compare_wave}
\end{figure}
\section{Conclusion}\label{sec_6}
We adapted the DNWR algorithm for a multi-domain configuration to solve time-delayed hyperbolic PDEs. Through Laplace-transform analysis in both 1D and 2D spatial domains, we proved that the DNWR algorithm converges in finitely many steps for multi-subdomain hyperbolic PDEs with time delays. For the optimal parameter $\theta = 1/2$, we showed the maximum number of iterations required to reach convergence is:
\begin{equation*}
k_{\text{conv}} = \left\lceil \frac{cT}{d_{\min}} \right\rceil+1.
\end{equation*}
This theoretical convergence is reinforced by numerical experiments under various structural configurations, which consistently attest to the algorithm's robustness. The results indicate that DNWR converges in fewer iterations than the classical SWR method. In comparison with DNWR, the NNWR method demonstrates improved convergence for larger time intervals. However, this advantage comes at the cost of increased computational effort, as NNWR requires roughly twice the work due to the additional Dirichlet and correction steps.
\section*{Acknowledgments}
The authors would like to thank IIT Bhubaneswar for the research facility.
\section*{Contribution}
All authors contributed equally.
\section*{Conflicts of Interest}
The authors declare no conflicts of interest.
\section*{Data availability}
All the data that were produced or generated during the course of the research have been included in the manuscript.
\bibliography{sn-bibliography}

\clearpage

\end{document}